\author{Robert Śmiech}
\title{Singular contact varieties}
\address{Robert \'Smiech, University of Warsaw, Faculty of Mathematics, Informatics and Mechanics, ul. Banacha 2, 02-097 Warszawa and IDEAS NCBR, ul. Chmielna 69, 00-801 Warszawa \\
ORCiD: 0000-0002-5654-8433}
\email{r.smiech@mimuw.edu.pl}
\newtheorem{thm}{Theorem}[section]
\newtheorem{lem}[thm]{Lemma}
\newtheorem{prop}[thm]{Proposition}
\newtheorem{cor}[thm]{Corollary}
\theoremstyle{definition}
\newtheorem{rmk}[thm]{Remark}
\newtheorem{defn}[thm]{Definition}
\newtheorem{exa}[thm]{Example}
\newtheorem{set}[thm]{Setting}
\DeclareMathOperator{\Pic}{Pic}
\DeclareMathOperator{\Exc}{Exc}
\DeclareMathOperator{\SL}{SL}
\DeclareMathOperator{\End}{End}
\DeclareMathOperator{\Hom}{Hom}
\DeclareMathOperator{\id}{id}
\DeclareMathOperator{\codim}{codim}
\DeclareMathOperator{\cone}{Cone}
\DeclareMathOperator{\diag}{diag}
\DeclareMathOperator{\stab}{Stab}
\newcommand{\Z}{\mathbb{Z}}
\newcommand{\R}{\mathbb{R}}
\newcommand{\C}{\mathbb{C}}
\newcommand{\p}{\mathbb{P}}
\begin{document}
	
	\keywords{holomorphic contact manifold, contact singularity, symplectic variety, symplectic singularity, nilpotent orbit}
	\subjclass[2020]{14M20, 14J42, 14J17}
	\begin{abstract}
		In this note we propose the generalization of the notion of a holomorphic contact structure on a manifold (smooth variety) to varieties with rational singularities and prove basic properties of such objects. Natural examples of \textit{singular contact varieties} come from the theory of nilpotent orbits: every projectivization of the closure of a nilpotent orbit in a semisimple Lie algebra satisfies our definition after normalization. We show the correspondence between symplectic varieties with the structure of a $\C^*$-bundle and the contact ones along with the existence of stratification \textit{\`a la} Kaledin. In the projective case we demonstrate the equivalence between crepant and contact resolutions of singularities, show the uniruledness and give a full classification of projective contact varieties in dimension 3.
	\end{abstract}
	\maketitle
	\section{Introduction}
	\subsection{Background}
	\textit{Contact geometry is an odd-dimensional counterpart of symplectic geometry} is a slogan that is frequently employed when discussing contact structures on manifolds. In the complex algebraic setting, smooth projective varieties equipped with a holomorphic contact structure are researched for their own merit: they are uniruled, so to study them one can employ methods using rational curves and their contractions \cite{KPSW}. Nevertheless, their classification is still open in the prime Fano case and conjecturally (\cite{LeBS94}) the only smooth Fano contact varieties are projectivizations of minimal nilpotent orbits for simple Lie groups. \par
	In the meantime, the seminal paper of Beauville (\cite{Bea00}) marked the birth of the singular symplectic geometry, that has since then become an important field of research, see for example the classical, although not very recent survey by Fu \cite{Fu05}. Fundamental examples of (singular) symplectic varieties come from the Lie theory: every coadjoint orbit of a complex semisimple Lie group is equipped with the Kostant-Kirillov symplectic form, and normalizations of the closures of nilpotent orbits have symplectic singularities. Some properties of general symplectic varieties, like the existence of the canonical stratification can be easily seen on those model examples.\par
	On the other hand, the idea of studying some generalization of the contact structure in the singular setting did not attract a comparable attention. The author is only aware of three works exploring such ideas. Namely, Campana and Flenner proposed a definition of a contact singularity \textit{\`a la} Beauville \cite{CF01} and Fu \cite{Fu06} studied the contact resolutions of projectivized nilpotent orbits. Therefore the purpose of this note it to propose the notion of a \textit{singular contact variety} that allows extending the classical construction of symplectization for smooth contact varieties and relate it to objects mentioned above. Some of the results that we discuss have analogues in the symplectic world, but most of our proofs do not rely on the symplectization correspondence. Additionally, in the projective case singular contact varieties are uniruled, so we can endow our toolbox with methods utilizing the existence of rational curves.
	\subsection{Overview}
	As we have just indicated, we begin by giving our definition of a singular contact variety (Definition \ref{defn.contact}) and discussing its relation with the notion of \textit{a contact singularity} of Campana and Flenner \cite{CF01}. Our motivation is twofold: first, we want to generalize the notion of a holomorphic contact manifold in such a way that we obtain some new examples but the situation is not entirely different from the smooth case. Moreover, with our definition we can extend the classical construction of the symplectization, i.e.~we present a correspondence between singular contact varieties and principal $\C^*$-bundles equipped with a homogeneous symplectic form (Theorem~\ref{thm.symplectization}). This correspondence allows us to prove an analogue of canonical stratification that for symplectic varieties was obtained by Kaledin~\cite{Kal06} (Theorem~\ref{thm.contact.stratification}). We check that projectivized normalization of nilpotent orbit closures of semisimple Lie algebras satisfy our definition (Proposition~\ref{prop.orbits}). Finally, we study when the contact structure descends to a finite quotient (Theorem~\ref{thm.quotient}).\par 
	Then we restrict ourselves to the projective category. In this setting we can show, using a recent result of Cao and H\"oring~\cite{CH22} that as in the smooth case, projective contact varieties are uniruled (Proposition~\ref{prop.not.psef}). This is a crucial property that opens up possibilities for utilizing Mori-theoretic methods in the future work. We also study the relation between being crepant and preserving contact structure for a birational morphism (Theorem~\ref{thm.contact.is.crepant} and Theorem~\ref{thm.crepant.is.contact}). \par 
	To conclude our work, we first study two examples, that we first construct using the finite quotient and our Theorem~\ref{thm.quotient} (Example~\ref{exa.fav} and Example~\ref{exa.p5}), but then we show that they are in fact isomorphic to union of some projectivized nilpotent orbits (Remark~\ref{rmk.example}). Then we use the developed tools to give a full classification of singular projective contact varieties in dimension 3 (Theorem~\ref{thm.threefolds} and Theorem~\ref{thm.3fold.fano}). As a by-product of our theorems, we give an example of a projective threefold whose singularities are nonrational, but at the same time they satisfy the definition of Campana-Flenner (Example~\ref{exa.CP}).
	\subsection{Notation and conventions}
	We work over the field of complex numbers, so in particular by symplectic (contact) structure we always mean a holomorphic one. Since we are interested in the algebraic category, a \textit{manifold} for us is a smooth algebraic variety. For a line bundle $L$ we denote by $L^\bullet$ the total space of its dual with the image of the zero section removed, it is clearly a $\mathbb{C}^*$--principal bundle. Moreover we do not make a distinction between line bundles and Cartier divisors. For the projectivized bundles we follow the Grothendieck convention, i.e.~$\p(\mathcal{E})$ is the space parametrizing one dimensional quotients of $\mathcal{E}$. By \textit{a curve} we mean an effective, irreducible and reduced 1-cycle.
	\subsection{Acknowledgements}
	The author would like to thank his advisor, Jaros{\l}aw Buczy{\'n}ski, for his guidance and patience during the preparation of this project that is a part of author's PhD dissertation. Moreover, he is grateful to Maria Donten-Bury, Jaros{\l}aw Wi{\'s}niewski and Andreas H{\"o}ring for enlightening discussions. The author would also like to express his gratitude to the anonymous referee, whose careful reading and helpful suggestions greatly improved this manuscript. Finally, during his PhD studies, the author was a member of the \textit{Kartezjusz} programme and he was supported by projects "Singular contact varieties" 2021/41/N/ST1/03812 and "Complex contact manifolds and geometry of secants" 2017/26/E/ST1/00231 of the National Science Centre of Poland.
	\section{Definition and basic properties}
	\begin{defn}\label{defn.contact}
		A contact variety is an algebraic variety $X$ over $\mathbb{C}$ of odd dimension $2n+1$ ($n \ge 0$) with rational singularities and a globally defined line bundle $L$ such that on the smooth locus $X_{sm}$ we have an exact sequence of vector bundles:
		\begin{displaymath}
			0 \rightarrow F \rightarrow TX_{sm} \xrightarrow{\vartheta} L_{|X_{sm}} \rightarrow 0
		\end{displaymath}
		which defines contact structure on $X_{sm}$, i.e. $d\vartheta : \bigwedge^2 F \rightarrow  L_{|X_{sm}}$ is nowhere degenerate. Equivalently one can demand that  $\vartheta \wedge (d \vartheta)^{\wedge n}$ as an element of $H^0(X_{sm}, \Omega^{2n+1}_{|X_{sm}} \otimes L^{n+1}_{|X_{sm}})$ has no zeroes. We will sometimes call $L$ from the definition \textit{contact line bundle}. Depending on which elements of the structure we need to emphasize, we will write $(X,L)$, $(X,L, \vartheta)$, $(X,F,L)$, etc.
	\end{defn}
	\begin{rmk}
		Note that in our definition we allow contact varieties of dimension 1. At the same time, we require normality, so they are necessarily smooth. Clearly, any smooth curve $C$ is equipped with a trivial contact structure by putting $L = TC$, $\vartheta = \id$. We explicitly include the one-dimensional case to easily state the Stratification Theorem~\ref{thm.contact.stratification}.
	\end{rmk}
	\begin{rmk}
		It is clear that singularities of such varieties are contact in the sense of Campana and Flenner \cite{CF01}. However, in their definition they do not assume the rationality of a singularity, but only normality. In this way, the notion of a contact singularity proposed by Campana and Flenner differs not only from our notion of a contact variety, but also from Beauville's definition of a symplectic singularity \cite{Bea00a}. Although Beauville does not assume rationality explicitly, the condition on the existence of extension of the symplectic 2-form to a resolution is equivalent to it, as shown by \cite[Thm 6]{Nam01}. We will construct an instance of a projective threefold with such singularities (nonrational but equipped with a contact form on the smooth locus) in Example~\ref{exa.CP}.
	\end{rmk}
	 We will note some easy consequences of the given definition, which will be useful later:
	\begin{prop}\label{prop.cartier}
		$-K_X$ is a Cartier divisor and we have $\mathcal{O}(-K_X) = L^{\otimes (n+1)}$ in $\Pic(X)$. Consequently, singularities of $X$ are Gorenstein and canonical.
	\end{prop}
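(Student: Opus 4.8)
The plan is to prove the identity $\mathcal{O}(-K_X) = L^{\otimes(n+1)}$ first on the smooth locus $X_{sm}$, then extend it over all of $X$ using that the singular locus has codimension $\geq 2$ and that $X$ is normal (so that $\Pic(X)$ injects into $\Pic(X_{sm})$ via restriction, and divisors extend uniquely across the small singular locus).

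\medskip

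\noindent\textbf{Step 1: the identity on the smooth locus.} On $X_{sm}$ I would take the top exterior power of the defining exact sequence $0 \to F \to TX_{sm} \xrightarrow{\vartheta} L_{|X_{sm}} \to 0$. Since this is a short exact sequence of vector bundles of ranks $2n$, $2n+1$, and $1$, taking determinants gives $\det TX_{sm} \cong (\det F) \otimes L_{|X_{sm}}$. The nondegeneracy condition is exactly what I need to compute $\det F$: the map $d\vartheta \colon \bigwedge^2 F \to L_{|X_{sm}}$ being nowhere degenerate means it induces an isomorphism $F \xrightarrow{\sim} F^\vee \otimes L_{|X_{sm}}$, and taking determinants of this isomorphism of rank-$2n$ bundles yields $\det F \cong (\det F)^{-1} \otimes L_{|X_{sm}}^{\otimes 2n}$, hence $(\det F)^{\otimes 2} \cong L_{|X_{sm}}^{\otimes 2n}$. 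One must then deduce $\det F \cong L_{|X_{sm}}^{\otimes n}$ rather than merely up to $2$-torsion; the cleanest route is to use the stated equivalent formulation: $\vartheta \wedge (d\vartheta)^{\wedge n}$ is a nowhere-vanishing section of $\Omega^{2n+1}_{X_{sm}} \otimes L_{|X_{sm}}^{\otimes(n+1)} = \omega_{X_{sm}} \otimes L_{|X_{sm}}^{\otimes(n+1)}$, which directly gives a trivialization $\omega_{X_{sm}} \cong L_{|X_{sm}}^{\otimes -(n+1)}$, i.e.\ $\mathcal{O}(-K_{X_{sm}}) \cong L_{|X_{sm}}^{\otimes(n+1)}$. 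This formulation sidesteps the torsion ambiguity entirely, so I would phrase the argument through the volume form.

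\medskip

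\noindent\textbf{Step 2: extending over the singularities.} Since $X$ is normal, its singular locus $\Sigma = X \setminus X_{sm}$ has codimension at least $2$. The line bundle $L$ is globally defined on $X$ by hypothesis, so $L^{\otimes(n+1)}$ is a genuine line bundle (Cartier divisor) on all of $X$. On $X_{sm}$ I have the isomorphism $\mathcal{O}(-K_{X_{sm}}) \cong L^{\otimes(n+1)}_{|X_{sm}}$ from Step~1. Because $\Sigma$ has codimension $\geq 2$ and $X$ is normal, the restriction map on Weil divisor class groups (equivalently, reflexive rank-one sheaves) $\mathrm{Cl}(X) \to \mathrm{Cl}(X_{sm})$ is an isomorphism, so the canonical divisor $K_X$, defined as the closure of $K_{X_{sm}}$, corresponds to the reflexive sheaf $\omega_X = j_*\omega_{X_{sm}}$ where $j\colon X_{sm}\hookrightarrow X$. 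The isomorphism of Step~1 pushes forward under $j_*$ to an isomorphism $\omega_X^{-1} \cong j_*(L^{\otimes(n+1)}_{|X_{sm}}) = L^{\otimes(n+1)}$, the last equality because $L^{\otimes(n+1)}$ is already a line bundle and hence reflexive. Thus $\mathcal{O}(-K_X) \cong L^{\otimes(n+1)}$ in $\Pic(X)$, and in particular $-K_X$ is Cartier.

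\medskip

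\noindent\textbf{Step 3: Gorenstein and canonical.} Once $-K_X$ (equivalently $K_X$) is Cartier, $X$ is by definition Gorenstein in codimension one and, since $X$ already has rational singularities (which are Cohen--Macaulay) together with $K_X$ Cartier, $X$ is Gorenstein. For canonicity I would invoke that rational Gorenstein singularities are automatically canonical: this is the standard implication that rational $+$ Gorenstein $\Rightarrow$ canonical (indeed terminal or canonical depending on the discrepancies, but rational Gorenstein singularities are exactly canonical Gorenstein ones by Elkik/Kempf-type results). The main obstacle is the bookkeeping in Step~1, namely ensuring the determinant computation yields the correct power $L^{\otimes(n+1)}$ without an extraneous $2$-torsion factor; using the volume-form description $\vartheta \wedge (d\vartheta)^{\wedge n}$ resolves this, so the argument is short once that formulation is adopted. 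The extension in Step~2 is routine given normality and the codimension-$2$ bound, and Step~3 is a citation of standard singularity theory.
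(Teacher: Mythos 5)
Your proposal is correct and follows essentially the same route as the paper: trivialize $\omega_{X_{sm}} \otimes L_{|X_{sm}}^{\otimes(n+1)}$ by the nowhere-vanishing section $\vartheta \wedge (d\vartheta)^{\wedge n}$, extend the resulting identity across the codimension-$\geq 2$ singular locus using normality (your reflexive-sheaf pushforward is just the sheaf-theoretic phrasing of the paper's ``unique closures of Weil divisors''), and conclude Gorenstein from rational $+$ Cartier $K_X$ and canonical from the standard rational-Gorenstein implication. Your aside about the $2$-torsion ambiguity in the determinant computation is a nice observation, but since you resolve it exactly as the paper does (via the volume form), the arguments coincide.
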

	\begin{proof}
		By an abuse of notation we will use the same symbol $L$ for the contact line bundle and the corresponding Weil divisor. On $X_{sm}$ we have an equality of classes of Weil divisors $(n+1)L_{|X_{sm}} = -K_{X_{sm}}$ given by $\vartheta \wedge (d \vartheta)^{\wedge n}$. By the normality of $X$ we can prolong this equality to whole $X$ by taking unique closures of both divisors. Then, because $L$ corresponds to a line bundle, so does the canonical class. Moreover, singularities of $X$ are rational by definition, so they are in particular Cohen-Macaulay and Gorenstein. Finally, the pair $(X,0)$ is canonical by \cite[Cor. 5.24]{KM92}.
	\end{proof}
	There is a classical construction of the symplectization, described for example in \cite[Thm. E.6]{Bu09} that associates to each contact manifold the total space of a principal $\C^*$-bundle equipped with a homogeneous symplectic form. We have defined contact varieties in such a way to preserve this correspondence in the singular setting. To be precise, we can show the following:
	\begin{thm}\label{thm.symplectization}
		Let $X$ be a contact variety with the contact line bundle $L$ and a twisted form $\vartheta$. Then $L^\bullet$ is a variety with symplectic singularities such that the symplectic form $\omega$, induced by $\vartheta$ is homogeneous of weight 1 with respect to the natural $\C^*$-action on $L^\bullet$. \\
		On the other hand, if $\pi: Y \rightarrow Z$ is a principal $\C^*$-bundle having the structure of a symplectic variety such that the 2-form is homogeneous of weight 1, then $Z$ is a contact variety with the twisted form induced from the symplectic one.
	\end{thm}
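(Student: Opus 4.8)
The plan is to reproduce the classical symplectization of \cite[Thm. E.6]{Bu09} on the smooth loci and then control the singularities separately, so that the two assertions come out as mutually inverse constructions. Throughout, let $p \colon L^\bullet \to X$ denote the projection and recall that $L^\bullet \subset L^*$ carries a tautological section $\tau$ of $p^*L^*$, namely $\tau_{(x,\phi)} = \phi$. The natural $\C^*$-action scales the fibre coordinate, hence scales $\tau$ linearly.

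First I would treat the contact-to-symplectic direction on smooth loci. Pulling back the twisted form via the differential gives $p^*\vartheta \circ dp \colon T(L^\bullet)_{sm} \to p^*L$, and pairing with the tautological section yields an honest $1$-form $\alpha := \langle \tau,\, p^*\vartheta \circ dp\rangle$ on $(L^\bullet)_{sm}$; I set $\omega := d\alpha$. Nondegeneracy of $\omega$ is a fibrewise linear-algebra computation: decomposing $T(L^\bullet)_{sm}$ into the Euler direction, the remaining fibre direction, and a horizontal part on which $\vartheta$ and $d\vartheta$ live, one checks that $\omega^{\wedge(n+1)}$ is a nowhere-vanishing section of $\Omega^{2n+2}_{(L^\bullet)_{sm}}$ precisely because $\vartheta\wedge(d\vartheta)^{\wedge n}$ has no zeroes. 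Since $\tau$ is scaled linearly by the $\C^*$-action, $\alpha$ and therefore $\omega = d\alpha$ are homogeneous of weight $1$.

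The core of the argument, and the step I expect to be the main obstacle, is upgrading ``symplectic form on the smooth locus'' to ``symplectic singularities'' in the sense of Beauville, which requires that $\omega$ extend to a holomorphic $2$-form on a resolution. Since $X$ is normal and $p$ is a $\C^*$-bundle, hence a smooth surjective morphism, $L^\bullet$ is normal. For the crucial rationality I would use that a principal $\C^*$-bundle is Zariski-locally trivial (it is classified by $\Pic X$), so $L^\bullet$ is locally $U \times \C^*$ over opens $U \subseteq X$; because rational singularities are preserved and reflected by smooth morphisms, $L^\bullet$ has rational singularities since $X$ does. Being Gorenstein (the top power of $\omega$ trivializes the canonical class on the smooth locus) and rational, the variety satisfies the hypotheses of \cite[Thm. 6]{Nam01}, which yields exactly the extension of $\omega$ to a resolution, i.e.~Beauville's condition.

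For the converse, let $E$ be the fundamental vector field of the $\C^*$-action on $Y$. Weight-$1$ homogeneity reads $\mathcal{L}_E\omega = \omega$, so Cartan's formula together with $d\omega = 0$ gives $\omega = d(\iota_E\omega)$; writing $\alpha := \iota_E\omega$, this $1$-form is semi-invariant of weight $1$, hence descends to a twisted form $\vartheta \in H^0\!\big(Z_{sm}, \Omega^1_{Z_{sm}}\otimes L\big)$, where $L$ is the line bundle with $L^\bullet \cong Y$. Nondegeneracy of $\vartheta$ follows from that of $\omega$: from $\iota_E\big(\omega^{\wedge(n+1)}\big) = (n+1)\,\alpha\wedge\omega^{\wedge n}$ and the nonvanishing of both $\omega^{\wedge(n+1)}$ and $E$ one gets $\alpha\wedge\omega^{\wedge n}\neq 0$, which descends to $\vartheta\wedge(d\vartheta)^{\wedge n}\neq 0$. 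Finally $Z$ inherits rational singularities from $Y$ by the same smooth-descent argument as above, so $(Z,L,\vartheta)$ is a contact variety, and the two constructions are visibly inverse to one another.
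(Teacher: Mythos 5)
Your proposal is correct and follows essentially the same route as the paper: both reduce to Namikawa's characterization of symplectic singularities (rational Gorenstein plus a symplectic form on the smooth locus), transfer rationality and the Gorenstein property through the Zariski-local triviality of the $\C^*$-bundle, and obtain the forms on the smooth loci via the classical symplectization/contactization correspondence. The only difference is that you write out the constructions of \cite[Section C.5, Prop.~C.16]{Bu09} explicitly (tautological $1$-form $\alpha$, contraction $\iota_E\omega$ with the Euler field), where the paper simply cites them.
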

	\begin{proof}
		First recall that by \cite[Theorem 6]{Nam01} a normal variety is symplectic if and only if it has rational Gorenstein singularities and its smooth part admits a holomorphic symplectic form. For the first statement, observe that singularities of $L^\bullet$ are rational and Gorenstein, as it is a locally trivial $\C^*$-bundle over $X$. Therefore it is enough to provide a homogeneous holomorphic symplectic form on $L^\bullet_{sm}$. But this space is nothing else that the principal bundle over the $X_{sm}$, so one can do it using the standard construction \cite[Section C.5]{Bu09}. \\
		For the second claim, $Z$ has rational singularities as a base of a locally trivial fibration $Y$ with rational singularities  and the fiber $\C^*$. Moreover, there exists a globally defined line bundle on $Z$, obtained as the dual of the line bundle associated to $Y$. Analogously as before, $Z_{sm} = \pi (Y_{sm})$, so we construct a twisted form on $Z_{sm}$ using the construction for the smooth varieties \cite[Prop. C.16]{Bu09}.
	\end{proof}
	
	\subsection{Kaledin's stratification for contact varieties}
	The fundamental application of the correspondence presented above is to provide an analogue of Kaledin's stratification for symplectic varieties \cite{Kal06}. Namely, we have:
	\begin{thm}\label{thm.contact.stratification}
	Let $(X,L,\vartheta)$ be a contact variety. Then there exists a finite stratification $ X = X_0 \supset X_1 \supset ... \supset X_k$ such that:
	\begin{enumerate}
		\item $X_{i+1}$ is the singular part of $X_{i}$.
		\item The normalization of every irreducible component of each stratum is a contact variety.
	\end{enumerate}
\end{thm}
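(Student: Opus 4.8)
The plan is to transport Kaledin's stratification of symplectic varieties through the symplectization correspondence of Theorem~\ref{thm.symplectization}. Applying that correspondence to $(X,L,\vartheta)$ produces the symplectic variety $L^\bullet$, a principal $\C^*$-bundle $\pi\colon L^\bullet \to X$ whose symplectic form $\omega$ is homogeneous of weight $1$. I would first invoke Kaledin's theorem \cite{Kal06}, which furnishes a finite stratification $L^\bullet = Y_0 \supset Y_1 \supset \dots$ with $Y_{i+1} = \Sing(Y_i)$ and such that the normalization of every irreducible component of each $Y_i$ is again a symplectic variety.

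The next step is to recognize this stratification as the pullback of a stratification of $X$. Since $\pi$ is a locally trivial $\C^*$-bundle, the singular locus of the total space is the preimage of the singular locus of the base, so $\Sing(\pi^{-1}(W)) = \pi^{-1}(\Sing(W))$ for every closed $W \subseteq X$. Starting from $Y_0 = \pi^{-1}(X)$ and defining $X_{i+1} = \Sing(X_i)$, a straightforward induction gives $Y_i = \pi^{-1}(X_i)$; this is the stratification of $X$ required by the theorem, and it is finite because the dimension drops strictly at each step. As $\pi^{-1}(V)$ is irreducible whenever $V$ is (a $\C^*$-bundle over an irreducible base is irreducible), the irreducible components of $Y_i$ are exactly the $\pi^{-1}(V)$ for $V$ an irreducible component of $X_i$.

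It then remains to promote each symplectic component back to a contact variety via the reverse implication of Theorem~\ref{thm.symplectization}. Here I would use that normalization commutes with the smooth morphism $\pi$: the normalization of $\pi^{-1}(V)$ is the $\C^*$-bundle pulled back to the normalization $\widetilde{V}$ of $V$, so $\widetilde{\pi^{-1}(V)} \to \widetilde{V}$ is again a principal $\C^*$-bundle carrying the symplectic structure provided by Kaledin. Granting that this form is homogeneous of weight $1$, Theorem~\ref{thm.symplectization} immediately yields that $\widetilde{V}$ is a contact variety, which is the assertion of part (2).

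The main obstacle is precisely the homogeneity claim for the induced symplectic forms on the strata. The cleanest route I foresee is through the Poisson description underlying Kaledin's construction: on $L^\bullet$ the bivector $\omega^{-1}$ has weight $-1$, the strata are $\C^*$-invariant because they are defined intrinsically by iterated singular loci, and the symplectic form on a stratum is recovered as the inverse of the restricted Poisson structure, which therefore again has weight $1$. Making this weight bookkeeping rigorous, in particular checking that the relation $\mathcal{L}_E\omega = \omega$ for the Euler field $E$ descends compatibly to each normalized leaf, is the technical heart of the argument; everything else is formal transport along the $\C^*$-bundle $\pi$.
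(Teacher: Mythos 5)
Your proposal is correct and follows essentially the same route as the paper: symplectize via Theorem~\ref{thm.symplectization}, apply Kaledin's stratification to $L^\bullet$, observe that the strata and their normalizations inherit principal $\C^*$-bundle structures (the paper lifts the action to the normalization by the universal property and realizes it as a pullback bundle, matching your ``normalization commutes with the smooth projection'' step), and recover weight-$1$ homogeneity from the weight-$(-1)$ Poisson structure that extends across the singular locus. The ``technical heart'' you flag is resolved in the paper exactly as you sketch it, by citing that the strata forms agree with the globally defined Poisson bracket, which is homogeneous of weight $-1$ by \cite[Lemma D.15]{Bu09}.
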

\begin{proof}
	Let $(L^\bullet, \omega)$ be the symplectization of $(X, L, \vartheta)$ and denote by $\pi$ the natural projection. Consider the symplectic stratification of $L^\bullet$ existing by \cite[Thm 2.3]{Kal06}. To prove our theorem it is enough to show that all the strata and their normalizations have a structure of a principal $\C^*$-bundle induced from the one on $L^\bullet$ and that the induced forms are homogeneous of weight $1$. \\
	First, observe that the singular locus of $L^\bullet$ must necessarily be preserved by the $\C^*$ action, so any of its components $Z$ still admits the structure of a principal $\C^*$-bundle. Then, the action can be lifted to the normalization $Z^n$. To see it, consider the diagram:
	\begin{center}
		\begin{tikzcd}
			\C^* \times Z^n \arrow[r, dashed] \arrow[d, "\id \times \eta"] & Z^n \arrow[d, "\eta"] \\
			\C^* \times Z \arrow[r] & Z,
		\end{tikzcd}
	\end{center}
	where $\eta$ is the normalization morphism and the lower horizontal arrow comes from the action. The existence of the dashed arrow comes from the universal property of normalization and it defines the $\C^*$ action on the normalization. Moreover, $Z^n$ also admits the structure of a principal $\C^*$-bundle, as it is equal to the pullback (via the normalization morphism of $\pi(Z)$) of the principal bundle on $\pi(Z)$. \\
	Now we need that the induced forms have the same weight as $\omega$. To that end, observe that in Kaledin's proof the symplectic forms on the strata agree with the globally defined Poisson structure. If we start with the symplectic form on $Y_{sm}$ which is homogeneous of weight $1$, then by \cite[Lemma D.15]{Bu09} the Poisson bracket is homogeneous of weight $-1$. Now, the bracket extends over the singular locus and it is still homogeneous of weight $-1$, so the induced forms on strata, which have to agree with the bracket have weight $1$.
\end{proof}
	\begin{cor}\label{cor.stratification}
		Codimensions of the strata are even. Moreover, if singularities of contact variety are terminal, then $\codim(X_{sing}) \ge 4$.
	\end{cor}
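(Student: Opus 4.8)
The plan is to obtain both statements as formal consequences of Theorem~\ref{thm.contact.stratification} combined with one classical fact from the minimal model program, with essentially no new geometric input. For the evenness, I would argue as follows. By part (2) of Theorem~\ref{thm.contact.stratification}, the normalization of each irreducible component $W$ of each stratum $X_i$ is a contact variety, and every contact variety has odd dimension by Definition~\ref{defn.contact}. Because normalization is a finite surjective morphism it preserves dimension, so $\dim W$ is odd; since $\dim X = 2n+1$ is also odd, the codimension $\dim X - \dim W$ is a difference of two odd integers, hence even. As this applies to every component of every stratum, all strata have even codimension.

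For the second assertion I would first identify $X_{sing}$ with the first stratum, namely $X_1 = \Sing(X_0)$. The previous paragraph then shows that each irreducible component of $X_{sing}$ has even codimension, hence codimension at least $2$. To sharpen this under the terminality hypothesis, I would invoke the standard fact that a variety with terminal singularities is smooth in codimension $2$, i.e. $\codim(X_{sing}) \ge 3$ (this reduces, after cutting with general hyperplane sections, to the statement that a terminal surface singularity is smooth; see \cite{KM92}). The crucial point is that evenness holds componentwise: each component of $X_{sing}$ has even codimension and simultaneously codimension at least $3$, so each in fact has codimension at least $4$, whence $\codim(X_{sing}) \ge 4$.

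Since the whole argument is a parity bookkeeping layered on top of the stratification, I do not expect a genuine obstacle here; the statement is soft once Theorem~\ref{thm.contact.stratification} is available. The two places that demand a little care are, first, to use evenness \emph{componentwise} rather than only for the global minimum over $X_{sing}$, as this is precisely what upgrades the bound from $3$ to $4$; and second, to confirm that our varieties fit the setting in which terminality forces smoothness in codimension two, which they do, being normal and indeed Gorenstein and canonical by Proposition~\ref{prop.cartier}, with terminal singularities by assumption.
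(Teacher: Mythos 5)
Your proof is correct and is exactly the argument the paper intends: the paper states the corollary without proof as an immediate consequence of Theorem~\ref{thm.contact.stratification} (odd-dimensionality of the contact normalizations of stratum components gives evenness), and its later use in Proposition~\ref{prop.ruled}, where terminality gives $\codim(X_{sing}) \ge 3$ via \cite[Lem.~1.3.1]{BS95} and evenness upgrades this to $\ge 4$ (or emptiness), confirms your reading of the second assertion. Your emphasis on applying evenness componentwise is the right care point, and nothing in your write-up deviates from the paper's implicit route.
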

\begin{rmk}
	Campana and Flenner showed \cite[Thm 3.5]{CF01} that contact singularities in their sense cannot be isolated and posed a question whether there are contact singularities for which the singular stratum has even dimension. As we have just seen, this is not possible with the assumption of rationality, but the question remains open in their setting.
\end{rmk}
Just as in the case of symplectic varieties, the model examples come from the Lie theory:
\begin{prop}\label{prop.orbits}
	For a semisimple group $G$ with a Lie algebra $\mathfrak{g}$ consider a nilpotent orbit $\mathcal{O} \subset \mathfrak{g}^*$ and its projectivization $\p(\mathcal{O}) \subset \p(\mathfrak{g}^*)$. Then the normalization of its closure $\overline{\p(\mathcal{O})}^n$ is a contact variety. 
\end{prop}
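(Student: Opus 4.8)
The plan is to apply the second half of Theorem~\ref{thm.symplectization}: I will realize the normalized orbit closure $\overline{\mathcal{O}}^n$, with its vertex deleted, as a principal $\C^*$-bundle over $\overline{\p(\mathcal{O})}^n$ carrying a weight-one homogeneous symplectic form, so that the base inherits a contact structure. Throughout I assume $\mathcal{O}\neq\{0\}$, since otherwise $\p(\mathcal{O})=\emptyset$.

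First I recall the symplectic input. Because nilpotency is preserved by scaling, the orbit $\mathcal{O}$ and its closure $\overline{\mathcal{O}}\subset\mathfrak{g}^*$ are stable under the $\C^*$-action $\xi\mapsto t\xi$; hence $\overline{\mathcal{O}}$ is exactly the affine cone over $\overline{\p(\mathcal{O})}\subset\p(\mathfrak{g}^*)$. The smooth locus of $\overline{\mathcal{O}}$ contains $\mathcal{O}$, equipped with its Kostant--Kirillov form $\omega$, and it is classical that $\overline{\mathcal{O}}^n$ is a symplectic variety: its singularities are rational and Gorenstein, and $\omega$ extends to a symplectic form on $(\overline{\mathcal{O}}^n)_{sm}$ (see \cite{Bea00a}).

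Next I set up the bundle and compute the weight. Write $\nu\colon\overline{\p(\mathcal{O})}^n\to\overline{\p(\mathcal{O})}$ for the normalization and set $L:=\nu^*\mathcal{O}_{\p(\mathfrak{g}^*)}(1)$. By definition $L^\bullet$ is the total space of $\nu^*\mathcal{O}_{\p(\mathfrak{g}^*)}(-1)$ with its zero section removed; this is a normal variety, and the induced morphism $L^\bullet\to\overline{\mathcal{O}}\setminus\{0\}$ is finite and birational. Since normalization commutes with the open immersion $\overline{\mathcal{O}}\setminus\{0\}\hookrightarrow\overline{\mathcal{O}}$, this identifies $L^\bullet$ with the open subvariety $\overline{\mathcal{O}}^n\setminus\nu^{-1}(0)$, which is again symplectic, and the principal $\C^*$-action on $L^\bullet$ corresponds to the (free) scaling action. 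It remains to verify that the symplectic form is homogeneous of weight one. Writing $m_t$ for multiplication by $t$, the formula $\omega_\xi(\ad^*_X\xi,\ad^*_Y\xi)=\xi([X,Y])$ together with the fact that $dm_t$ carries the fundamental vector field of $X$ at $\xi$ to that at $t\xi$ gives $m_t^*\omega=t\,\omega$. Thus the hypotheses of the second part of Theorem~\ref{thm.symplectization} are met, and $(\overline{\p(\mathcal{O})}^n,L)$ is a contact variety.

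The main obstacle I expect is the bookkeeping around normalization: one has to ensure that the normalized cone, away from its vertex, is genuinely the $\C^*$-bundle $L^\bullet$ pulled back from the normalized base, and not merely some variety mapping to it; normality of $L^\bullet$ combined with finiteness and birationality of $L^\bullet\to\overline{\mathcal{O}}\setminus\{0\}$ is what forces this identification. Beyond that, one should state the classical symplecticity of $\overline{\mathcal{O}}^n$ and the weight-one computation for $\omega$ precisely enough that the hypotheses of Theorem~\ref{thm.symplectization} are literally satisfied; both are routine but must be spelled out.
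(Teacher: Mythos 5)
Your proposal is correct and follows essentially the same route as the paper: both invoke Beauville's result that $\overline{\mathcal{O}}^n$ is a symplectic variety, note that the Kostant--Kirillov form is homogeneous of weight one for the scaling action, and then apply the second half of Theorem~\ref{thm.symplectization}, with your identification of $L^\bullet$ with the punctured normalized cone being exactly the careful version of the paper's remark that normalization commutes with projectivization.
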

\begin{proof}
	Recall that the normalization of a nilpotent orbit closure for a semisimple Lie group is a symplectic variety for a Kostant-Kirillov 2-form, as observed in \cite[2.5]{Bea00a}. Moreover, every nilpotent orbit is preserved by a natural $\C^*$-action on $\mathfrak{g}^* \setminus \{ 0\}$ and the symplectic form is homogeneous of weight 1. Therefore, as the normalization commutes with the projectivization, we are in the situation of Theorem \ref{thm.symplectization} and conclude that the quotient by the $\C^*$-action, i.e.~the projectivization admits a contact structure. Note that the contact line bundle comes from the restriction of $\mathcal{O}_{\p(\mathfrak{g}^*)}(1)$.
\end{proof}
As the nilpotent orbits for semisimple groups are partially ordered by inclusion in closure, they provide an excellent illustration of the stratification. Nevertheless, as we will see later, even in the projective case there are many other examples.
\subsection{Quotients by finite groups}
	As a last general result, we will prove a criterion that determines when a finite quotient of a contact variety has an induced contact structure. It will be useful for constructing examples. We let $G$ be a finite group acting on a contact variety (or manifold) $X$ that preserves the contact distribution. We start our considerations by taking a closer look at the contact line bundle $L$. In the smooth case, where the contact distribution is defined everywhere, this line bundle is automatically linearized, i.e.~there exists a canonical lift of the action of $G$ to the total space of $L$. Indeed, $L$ is defined as the quotient of $TX$ by $F$ and by the definition of $G$, both of those vector bundles come with a $G$-linearization. In the singular case we need a more subtle argument:
	\begin{prop}\label{prop.linearization}
		Let $(X,F,L)$ be a singular contact variety and $G$ a group of automorphisms preserving the contact distribution $F$. Then $L$ is equipped with the canonical $G$-linearization.
	\end{prop}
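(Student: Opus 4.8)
The plan is to build the linearization first on the smooth locus, where the argument from the contact manifold case applies essentially verbatim, and then to extend it across the singular locus using that $X$ is normal with $X_{sing}$ of codimension at least $2$. Since $G$ acts by automorphisms, each $g \in G$ preserves the smooth locus $X_{sm}$, and the tangent bundle $TX_{sm}$ carries its tautological $G$-linearization given by the differentials $dg$. By hypothesis $G$ preserves the contact subbundle $F \subseteq TX_{sm}$, so this linearization restricts to $F$ and descends to the quotient $L_{|X_{sm}} = TX_{sm}/F$. Concretely, for each $g$ I obtain an isomorphism $\lambda_g^\circ \colon (g^* L)_{|X_{sm}} \xrightarrow{\sim} L_{|X_{sm}}$, and these satisfy the cocycle identity $\lambda_{gh}^\circ = \lambda_g^\circ \circ g^* \lambda_h^\circ$ together with $\lambda_e^\circ = \id$ over $X_{sm}$.

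The substantive step is to extend each $\lambda_g^\circ$ to an isomorphism $\lambda_g \colon g^* L \to L$ on all of $X$. Here I would use that $X$, having rational singularities, is in particular normal, so by Serre's criterion ($R_1 + S_2$) the singular locus $X_{sing} = X \setminus X_{sm}$ has $\codim(X_{sing}) \ge 2$. Writing $j \colon X_{sm} \hookrightarrow X$ for the open inclusion, normality gives $j_* \mathcal{O}_{X_{sm}} = \mathcal{O}_X$, and hence for any line bundle $M$ on $X$ the restriction map $H^0(X, M) \to H^0(X_{sm}, M)$ is an isomorphism (Hartogs-type extension across a codimension-$\ge 2$ set). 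Applying this to $M = \mathcal{H}om(g^* L, L)$, the isomorphism $\lambda_g^\circ$, viewed as a section, extends uniquely to a section $\lambda_g$ of $M$ over $X$.

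To see that $\lambda_g$ is again an isomorphism, I would extend the inverse $(\lambda_g^\circ)^{-1}$, a section of $\mathcal{H}om(L, g^* L)$, in the same way; the relation that the two composites equal $\id$ holds on the dense open $X_{sm}$ and therefore on all of $X$ by the uniqueness part of the extension, so $\lambda_g$ is invertible everywhere. The same uniqueness shows the cocycle and normalization identities $\lambda_{gh} = \lambda_g \circ g^* \lambda_h$ and $\lambda_e = \id$, equalities of morphisms of line bundles on $X$ holding over the dense open $X_{sm}$, propagate to all of $X$. Thus $\{\lambda_g\}$ is a $G$-linearization of $L$, canonical in that it makes no choices and restricts to the standard one on $X_{sm}$. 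The only ingredient beyond the smooth case is this extension step, and I expect the principal subtlety to be verifying that the extended sections remain \emph{nowhere vanishing}, i.e.~honest isomorphisms rather than merely nonzero maps; that is exactly what the codimension-$\ge 2$ normality argument secures.
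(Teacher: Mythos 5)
Your proof is correct, and its geometric core is the same as the paper's: the canonical linearization on $X_{sm}$ coming from $L_{|X_{sm}} = TX_{sm}/F$, followed by a Hartogs-type extension across the singular locus, which has codimension at least $2$ by normality. The difference is in the packaging. The paper invokes Brion's criterion \cite[Lemma 2.9]{Bri15}, which reduces the existence of a linearization to a single isomorphism $\pi_2^* L \simeq \alpha^* L$ over $G \times X$, and then extends that one isomorphism from the smooth locus by noting that, in local trivializations, it is given by a nowhere vanishing regular function on $U \cap X_{sm}$ which extends to a nowhere vanishing function on all of $U$. You instead build the family $\lambda_g \colon g^* L \to L$ element by element and verify the cocycle identities via uniqueness of extensions; your device of also extending $(\lambda_g^\circ)^{-1}$ and using density to conclude that the composites are the identity is a clean substitute for the paper's ``nowhere vanishing extension'' step, which otherwise requires the small remark that the zero locus of the extension would be a divisor contained in a codimension $\ge 2$ set, hence empty. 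One caveat: your element-wise construction yields a genuine $G$-linearization only because $G$ is finite (the standing assumption of this section of the paper, though not restated in the proposition); for a positive-dimensional group one must additionally know that the isomorphisms vary algebraically in $g$, which is exactly what the paper's formulation over $G \times X$ provides for free.
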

	\begin{proof}
		Consider the product $G \times X$ with two morphisms to $X$: $\alpha$ corresponding to the action and $\pi_2$ -- the projection from the product. Then by \cite[Lemma 2.9]{Bri15} the bundle $L$ is linearized if and only if $\pi_2^* L \simeq \alpha^*L$. On the smooth locus we have a canonical action of $G$ on $L$, as it is a quotient of $TX_{sm}$ by $F$ there. Moreover, $G$ acts on $TX_{sm}$ preserving $F$, so we have the above mentioned isomorphism of pullback bundles on $X_{sm}$. As the singular locus has codimension at least 2 by normality, this isomorphism extends to the whole $X$. Indeed, on any open $U \subset X$ trivializing both bundles such isomorphism is given by a nowhere vanishing regular function $f$ defined on $U \cap X_{sm}$. Any such $f$ can be extended to a nowhere vanishing regular function on the whole $U$ by normality of $X$. The resulting linearization is canonical, as the unique extension of the canonically defined linearization on the smooth locus.
	\end{proof}
	To formulate our statement we need to recall the definition of a descent of a vector bundle:
	\begin{defn}
	Let $Y$ be a $G$-variety with a good quotient $\pi \colon Y \rightarrow Y/G$. The pullback of any vector bundle on $Y/G$ has a natural structure of a $G$-vector bundle on $Y$. We say that a $G$-vector bundle $\mathcal{E}$ on $Y$ \textit{descends} if there exists a vector bundle $\mathcal{E}'$ on $Y/G$ such that we have a $G$-equivariant isomorphism $\mathcal{E} \simeq_G \pi^* \mathcal{E}'$. This vector bundle, if it exists, is unique and equal to $(\pi_* \mathcal{E})^G$. We will frequently call it \textit{the descent} of~$\mathcal{E}$.
\end{defn}
	The question on the existence of the descent has a precise answer, known in the literature as the Kempf lemma \cite[Thm 2.3]{DN89}. A $G$-vector bundle on a variety descends to the quotient if and only if the stabilizer of any closed point from a closed orbit acts trivially on the fiber. \par 
	 It will be useful to have estimates on the dimension of subschemes of points stabilized by some $g \in G$:
	\begin{lem}\label{lem.components}
	Let $(X,F,L)$ be a smooth contact manifold of dimension $2n +1 \ge 3$, $G$ a finite subgroup of automorphisms preserving the contact distribution $F$ and $Z^g$ the subscheme of points of $X$ stabilized by some $g \in G \setminus \{ \id \}$ (it can have multiple components). Then $Z^g$ does not have any divisorial component, i.e.~its codimension in $X$ is at least 2.
\end{lem}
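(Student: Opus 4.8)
The plan is to reduce the statement to a pointwise linear-algebra computation at a fixed point and to extract a contradiction from the non-degeneracy of $d\vartheta$ together with the equivariance forced by the hypothesis that $g$ preserves $F$. First I would argue by contradiction: suppose $Z^g$ has a divisorial component $D$, so $\codim_X D = 1$. Since $g$ has finite order, its fixed-point scheme $Z^g$ is smooth, and at a point $p \in D$ the tangent space $T_pD = T_pZ^g$ is exactly the $(+1)$-eigenspace of the differential $\Phi := dg_p$ acting on $T_pX$. As $\Phi$ is a finite-order automorphism of $T_pX$, it is diagonalizable with eigenvalues roots of unity; since its fixed subspace has codimension $1$, there is a single eigenvalue $\zeta \neq 1$, of multiplicity $1$. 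I write $T_pX = V_+ \oplus V_\zeta$ with $V_+ = T_pD$ the fixed part and $V_\zeta$ the $\zeta$-eigenline.

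Next I would record the equivariance. Because $g$ preserves $F$, it acts on $L = TX/F$ via the canonical linearization of Proposition~\ref{prop.linearization}, and the quotient map $\vartheta_p \colon T_pX \to L_p$ is equivariant: letting $\mu$ be the scalar by which the action operates on the line $L_p$, one has $\vartheta_p \circ \Phi = \mu\,\vartheta_p$, and writing $A := \Phi|_{F_p}$ the twisted two-form satisfies $d\vartheta_p(Au,Av) = \mu\, d\vartheta_p(u,v)$ for $u,v \in F_p$. Taking the top exterior power over the $2n$-dimensional space $F_p$ and using that $(d\vartheta_p)^{\wedge n} \neq 0$ yields the determinant relation $\det A = \mu^n$.

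Then I would split into cases according to the position of the contact hyperplane $F_p$ relative to $V_+ = T_pD$. Feeding the eigenvectors into $\vartheta_p \circ \Phi = \mu\,\vartheta_p$ and using surjectivity of $\vartheta_p$ onto the line $L_p$, one sees that either $V_\zeta \subset F_p$, forcing $\mu = 1$, or else $F_p = V_+$, forcing $\mu = \zeta$. In the first case $A$ has eigenvalues $1$ with multiplicity $2n-1$ and $\zeta$ once, so $\det A = \zeta$, while the relation above gives $\det A = \mu^n = 1$; hence $\zeta = 1$, a contradiction. In the second case $\Phi$ acts trivially on $F_p = V_+$, so $A = \id$ and the equivariance reduces to $d\vartheta_p(u,v) = \mu\, d\vartheta_p(u,v)$; since $d\vartheta_p \neq 0$ this forces $\mu = \zeta = 1$, again a contradiction. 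Either way no divisorial component can exist, which gives $\codim_X Z^g \geq 2$.

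The hard part — and the reason the two cases must be separated — is the case $F_p = V_+$: there the determinant relation alone is inconclusive, since it only yields $\zeta^n = 1$, and one genuinely needs the non-degeneracy of $d\vartheta$ on all of $F_p$ rather than merely the non-vanishing of its top power. This is also where the hypothesis $\dim X = 2n+1 \geq 3$, i.e.\ $n \geq 1$ so that $F \neq 0$ and $d\vartheta$ is a nontrivial pairing, is essential; for $n = 0$ the determinant relation degenerates and the argument would break down, as indeed it must.
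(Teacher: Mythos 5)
Your proof is correct and takes essentially the same approach as the paper: a pointwise eigenvalue analysis of $dg_p$ at a fixed point, split according to whether the stabilizer acts trivially on $L_p$, using the equivariance of the twisted pairing $d\vartheta$ in the nontrivial case (the paper phrases this as the multiset identity coming from $F \simeq F^* \otimes L$) and the determinant relation $\det A = \mu^n$ (the paper's $\mathcal{O}(-K_X) = L^{\otimes (n+1)}$) in the trivial case. The only difference is presentational: you argue by contradiction assuming a single non-unit eigenvalue, while the paper bounds the codimension of the fixed subspace of the tangent space directly.
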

\begin{proof}
	Let us pick a smooth point $x$ in some component $Y \subset Z^g$. From the Luna slice theorem \cite[Thm~5.4]{Dre04} we have $T_x Y \simeq (T_xX)^G$, so to calculate the dimension of $Y$ at each point it is enough to determine the dimension of the invariant subspace of the tangent space. To that end, define $H$ to be the subgroup of $G$ generated by $g$ and observe that $(T_xX)^G \subset (T_x X)^H$, so we will consider this bigger subspace. Denote by $\phi_g$ the automorphism of $T_x X$ induced by $g$. By the definition of $G$ we have $\phi_g (F) \subset F$ and the invertibility of $\phi_g$ implies that $\phi_g (F) = F$. The short exact sequence appearing in Definition~\ref{defn.contact} implies the existence of a direct sum decomposition $T_x X = L_x \oplus F_x$ that in our case is in the category of $G$-modules. $\phi_g$ is an invertible linear map such that $\phi_g^k = \id$ for some $k \in \Z_{>0}$, so by picking a diagonal base for $F$ we obtain $\phi_g = \diag(\xi_0, \xi_1,...,\xi_{2n})$, where $\xi_i$ are roots of unity of degree dividing $k$ and let $\xi_0$ be the root acting on $L_x$. The nondegeneracy of the contact form implies that there is an isomorphism $F \simeq F^* \otimes L$. This isomorphism gives us the following equality of sets: $\{ \xi_1, ...,\xi_{2n}\} = \{ \xi^{-1}_1 \cdot \xi_0, ..., \xi_{2n}^{-1} \cdot \xi_0 \}$. \par
	Now, if $\xi_0\ne 1$ then it follows that it is not possible for all other roots to be equal to 1, so ($T_xX)^H$ has codimension at least 2 in $T_xX$, so we conclude. If $\xi_0 = 1$, then the equality of sets above is not enough to conclude. Recall that we have $\mathcal{O}_X(-K_X) = L^{\otimes(n+1)}$, so in this case the action of $\phi_g$ on $\mathcal{O}_X(-K_X)_x$ is also trivial. But this implies that $\xi_0 \cdot \xi_1 \cdot ... \cdot \xi_{2n} =1$, so it is not possible that only one root differs from 1 and we conclude as before.
\end{proof}
From the proof above one can also deduce a statement on the singularities of the quotient:
\begin{cor}\label{cor.sing.quot}
	In the setting of Lemma~\ref{lem.components}, if $\pi \colon X \rightarrow X/G$ is the quotient map, then the variety $X/G$ is singular along $\pi(Z^g)$ for every $g \in G \setminus \{ \id \}$.
\end{cor}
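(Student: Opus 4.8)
The plan is to reduce the assertion to a statement about a linear quotient singularity via the Luna slice theorem, and then to detect singularity using the Chevalley--Shephard--Todd theorem. Fix a point $x \in Z^g$ and write $G_x \subset G$ for its stabilizer; it is nontrivial since $g \in G_x$. As $G$ is finite, the orbit of $x$ is zero-dimensional, so the Luna slice theorem \cite[Thm~5.4]{Dre04} identifies an \'etale neighbourhood of $\pi(x)$ in $X/G$ with an \'etale neighbourhood of the image of the origin in the linear quotient $T_xX / G_x$. It therefore suffices to show that $T_xX / G_x$ is singular at the origin.

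To this end I would observe that the eigenvalue computation carried out in the proof of Lemma~\ref{lem.components} applies to every non-identity element $h \in G_x$ at the point $x$: each such $h$ lies in $G$, hence fixes $x$ and preserves the contact distribution $F$, so $\phi_h$ decomposes as $\diag(\xi_0, \xi_1, \dots, \xi_{2n})$ on $T_xX = L_x \oplus F_x$, and the argument given there shows that the invariant subspace $(T_xX)^{\langle h \rangle}$ has codimension at least $2$. In other words, no non-identity element of $G_x$ acts on $T_xX$ as a pseudoreflection, so $G_x$ is a nontrivial group containing no pseudoreflections and is in particular not generated by them. (Note that the same codimension estimate also guarantees that the $G_x$-action on $T_xX$ is faithful, since a non-identity element acting trivially would fix all of $T_xX$.)

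By the Chevalley--Shephard--Todd theorem, a finite linear quotient $V/H$ is smooth at the image of the origin if and only if $H$ is generated by pseudoreflections; since $G_x$ fails this criterion, $T_xX / G_x$ is singular at the origin. Combined with the slice description, this shows that $X/G$ is singular at $\pi(x)$, and as $x \in Z^g$ was arbitrary we conclude that $X/G$ is singular along $\pi(Z^g)$. The point requiring the most care is the use of Chevalley--Shephard--Todd in its singularity-detecting form: one must know that the \emph{absence} of pseudoreflections in the nontrivial stabilizer \emph{forces} the origin to be a singular point of the quotient, rather than merely being consistent with it, and this is exactly where the codimension estimate of Lemma~\ref{lem.components}, rephrased as the non-existence of pseudoreflections in $G_x$, does the real work.
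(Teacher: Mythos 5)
Your proof is correct and takes essentially the same route as the paper: both detect the singularity via the Chevalley--Shephard--Todd criterion, combined with the codimension-$\ge 2$ estimate from the proof of Lemma~\ref{lem.components}, which shows that no non-identity element of the stabilizer is a pseudoreflection. You merely make explicit the reduction to the linear model $T_xX/G_x$ via the Luna slice theorem and the faithfulness of the stabilizer action, points the paper's terser argument leaves implicit.
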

\begin{proof}
	Recall the classical theorem attributed to Chevalley-Shephard-Todd \cite[Ch.~6]{Ben93}: the quotient is smooth if and only if the stabilizer of each point is generated by pseudoreflections, that is elements that fix pointwise a codimension 1 subvariety containing~$x$. But we have just seen in the proof of Lemma~\ref{lem.components} that in our case the codimension is at least 2. Therefore, the image of any point with a nontrivial stabilizer is singular.
\end{proof}
Now observe that if $X$ is a contact variety with a nonempty singular locus, then it is not possible for a singular point of $X$ to be mapped to a smooth one in the quotient (by a finite group preserving the contact distibution).\footnote{The author thanks the anonymous referee for this observation.} Indeed, by Lemma~\ref{lem.components} and the normality of $X$ the locus where the map $\pi \colon X \rightarrow X/G$ is not \'etale has codimension at least 2 in $X$ (such maps are sometimes called \textit{quasi-\'etale}). By the purity of the branch theorem \cite{Nag59}, the morphism $\pi$ is \'etale over any smooth point of $X/G$, so the preimage of $(X/G)_{sm}$ is smooth.

 Let us put $\mathcal{X} = X_{sm} \setminus \bigcup_{g \in G \setminus \{\id \}} Z^g$ and note that the discussion above shows that $\mathcal{X}$ is precisely the preimage of $(X/G)_{sm}$ and that the morphism $\pi$ is \'etale on $\mathcal{X}$. Moreover, every $G$-linearized vector bundle on $\mathcal{X}$ has a descent (to a vector bundle on $(X/G)_{sm}$), we have an isomorphism of tangent spaces $T\mathcal{X} \simeq \pi^* T(X/G)_{sm}$ \cite[Ch.~I, Prop.~2.9]{Mil13} so the tangent bundle of~$\mathcal{X}$ descends to the tangent bundle of $(X/G)_{sm}$. This discussion guides us to propose the following definition: 
\begin{defn}\label{defn.descent}
	Let $(X,F,L, \vartheta)$ be a contact variety of dimension $2n+1 \ge 3$ and $G$ a finite group of automorphisms preserving $F$. Let us take the contact exact sequence of $X$ restricted to $\mathcal{X}$:
	\begin{equation}\label{eqtn.restricted.contact}
		0 \rightarrow F_{|\mathcal{X}} \rightarrow T\mathcal{X} \xrightarrow{\vartheta_{|\mathcal{X}}} L_{|\mathcal{X}} \rightarrow 0.
	\end{equation}
	Consequently, we have a sequence of vector bundles on $(X/G)_{sm}$:
	\begin{equation}\label{eqtn.descent}
		0 \rightarrow (\pi_*F_{|\mathcal{X}})^G \rightarrow T(X/G)_{sm} \xrightarrow{\vartheta'} (\pi_*L_{|\mathcal{X}})^G, 
	\end{equation}
	that is exact, as it comes from the application of two left exact functors, $\pi_*$ and $(\cdot)^G$. We say that $X/G$ has \textit{the induced contact structure} if the above sequence of vector bundles on $(X/G)_{sm}$ gives a contact structure on $X/G$. Precisely, we demand the surjectivity of $\vartheta'$, the existence of the line bundle $L'$, extending $(\pi_* L_{|\mathcal{X}})^G$ and the nondegeneracy of $d\vartheta'$.
\end{defn}
Our goal now is to discuss conditions that imply the existence of the induced contact structure on the quotient. First, observe that clearly the existence of the descent of $L$ is a necessary condition, however the following example illustrates that it is not sufficient:
\begin{exa}\label{exa.quot}
	Let us consider $\C^4 \ni (x_0, x_1, x_2, x_3)$ with the symplectic form $\omega = dx_0 \wedge dx_2 + dx_1 \wedge dx_3$ and the action of $\Z_2$ with generator $g$, where $g \cdot (x_0, x_1, x_2, x_3) = (x_0, x_1, -x_2, -x_3)$. Then the projective space $\p^3 = \p(\C^4)$ has the (standard) contact structure $(F, L, \vartheta)$ and an action of $\Z_2$. The symplectic and contact forms are not fixed by the action: in fact we have $g \cdot \omega = -\omega$, $g \cdot \vartheta = -\vartheta$. At the same time, the line bundle $\mathcal{O}(-1)$ admits a linearization coming from the action on $\C^4$ and as a result, the action of the stabilizer of any point $x$ on $L_x  = \mathcal{O}(2)_x$ is trivial. Moreover, the kernel of $\vartheta$, i.e.~$F$ is preserved by the action. Consequently, the quotient $\p^3 / \Z_2$ admits a globally defined line bundle descended from $L$, a rank $2$ distribution defined on its smooth locus that is induced by $F$, yet they do not give the structure of a contact variety, as the map induced from $\vartheta$ is 0.
\end{exa}
We are now ready to state our criterion:
	\begin{thm}\label{thm.quotient}
	Let $(X,F,L, \vartheta)$ be a contact variety of dimension $2n+1 \ge 3$ and $G$ a finite group of automorphisms preserving $F$. The quotient $X/G$ has the induced contact structure $(F',L', \vartheta')$ if and only if the following two conditions are satisfied:
	\begin{enumerate}
		\item $\forall_{x \in X_{sing}}$ $\stab(x)$ acts trivially on $L_x$.
		\item $\forall_{g \in G}$ $g^* \vartheta = \vartheta$,
	\end{enumerate}
\end{thm}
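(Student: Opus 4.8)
\textit{The plan.} I would treat the two hypotheses as governing two logically independent features of the quotient: condition (2) controls whether the twisted form descends to a \emph{nonzero} object, while condition (1) controls whether the descended contact line bundle is a genuine (locally free) line bundle across the singular locus. Throughout I work with the open set $\mathcal{X}=X_{sm}\setminus\bigcup_{g\neq\id}Z^{g}$, on which $\pi$ is étale and free and $\pi(\mathcal{X})=(X/G)_{sm}$; there every $G$-linearized bundle descends and $\vartheta'$ is literally the descent of $\vartheta|_{\mathcal{X}}$, so the nondegeneracy of $d\vartheta'$ and the surjectivity of $\vartheta'$ are inherited étale-locally from $X$. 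Consequently the entire content of the theorem is concentrated on the behaviour over the singular locus $\pi\!\left(\bigcup_{g}Z^{g}\right)\cup\pi(X_{sing})$, and both implications reduce to an analysis there.

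For sufficiency I would first use (2) to guarantee that $\vartheta$ is $G$-invariant for the canonical linearization of Proposition~\ref{prop.linearization}; then the averaging operator $\frac{1}{|G|}\sum_{g}g\cdot\vartheta$ returns $\vartheta$, so $\vartheta'$ agrees with $\vartheta$ on $\mathcal{X}$, is surjective, and has nondegenerate differential. It remains to produce $L'$ as a line bundle on all of $X/G$. Here I invoke the Kempf lemma: the descent $(\pi_{*}L)^{G}$ is locally free exactly when the stabilizer of every closed point acts trivially on the fibre of $L$. Points with nontrivial stabilizer lie in $\bigcup_{g}Z^{g}$, which splits into its singular part --- handled by (1) --- and its smooth part. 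On the smooth fixed loci I would run the eigenvalue bookkeeping from the proof of Lemma~\ref{lem.components}: diagonalize $\phi_{g}$ on $T_{x}X=L_{x}\oplus F_{x}$, write the action on $L_{x}$ as $\xi_{0}$, and combine the pairing $F\simeq F^{*}\otimes L$ with the identity $\mathcal{O}(-K_{X})=L^{\otimes(n+1)}$ of Proposition~\ref{prop.cartier} and the $G$-invariance of $\vartheta$ to control $\xi_{0}$. Once triviality on all fibres is established, $L'=(\pi_{*}L)^{G}$ is the required line bundle, $X/G$ inherits rational Gorenstein singularities from $X$, and $\vartheta'$ equips it with the induced contact structure.

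For necessity I reverse the two observations. If $(F',L',\vartheta')$ is an induced contact structure then $\pi^{*}L'\simeq_{G}L$ exhibits $L$ as a pullback, so by the Kempf lemma the stabilizer of each closed point --- in particular of each singular point --- acts trivially on $L_{x}$, which is precisely (1). For (2) I use that, by Definition~\ref{defn.descent}, $\vartheta'$ is the invariant part $(\pi_{*}\vartheta)^{G}$, equivalently the average of the $g\cdot\vartheta$; since $g$ preserves $F$ one has $g\cdot\vartheta=\lambda_{g}\vartheta$ for a character $\lambda$, and if $\lambda$ is nontrivial the averaged invariant part of $\vartheta$ vanishes by orthogonality of characters (the case $g\cdot\vartheta=-\vartheta$ of Example~\ref{exa.quot}), so $\vartheta'$ is not surjective and cannot define a contact structure. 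Hence $g^{*}\vartheta=\vartheta$ for all $g$.

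The step I expect to be the main obstacle is exactly the smooth-fixed-locus case in the sufficiency direction: verifying that the stabilizer acts trivially on $L_{x}$ at smooth points $x$ that nonetheless map to singular points of $X/G$ (such loci have codimension $\geq 2$ by Lemma~\ref{lem.components} and produce singularities by Corollary~\ref{cor.sing.quot}). The delicate point is that $G$-invariance of $\vartheta$ only forces the eigenvalue on $L_{x}$ to match the one induced on $T_{x}X/F_{x}$, whereas local freeness of $L'$ demands that this eigenvalue be trivial; reconciling these through the contact relations --- the pairing on $F$ and the canonical-class identity --- is the technical heart of the argument, and it is also the mechanism that separates genuine contact quotients from the near-misses of Example~\ref{exa.quot}. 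Keeping careful track of the distinction between the canonical linearization of Proposition~\ref{prop.linearization} and any a priori given one will be essential at this step.
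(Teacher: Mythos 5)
Your proposal reproduces the paper's overall architecture faithfully: rationality of the quotient, descent of all the data over the free locus $\mathcal{X}=\pi^{-1}((X/G)_{sm})$, Kempf's lemma \cite[Thm~2.3]{DN89} to produce a line bundle $L'$ on all of $X/G$, and necessity by pulling back $L'$ and $\vartheta'$. But the sufficiency direction hinges on exactly one nontrivial claim, and you do not prove it: that at a \emph{smooth} point $x$ with nontrivial stabilizer, condition (2) forces $\stab(x)$ to act trivially on $L_x$. You name this step as ``the main obstacle'' and ``the technical heart'' and leave it open; without it Kempf's lemma gives no $L'$ across $\pi\bigl(\bigcup_g Z^g\bigr)$, and the sufficiency direction is simply not established. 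The paper settles precisely this point (rather than deferring it): it uses the $G$-stable splitting $T_xX=F_x\oplus L_x$ and the fact that, by the invariance of $\vartheta$, the surjection $\vartheta_x\colon T_xX\to L_x$ is $\stab(x)$-equivariant, and concludes the triviality of the action on $L_x$ from this commutation. That short representation-theoretic argument is the content-bearing step of the whole proof, and your write-up contains no substitute for it.

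Moreover, the substitute you sketch cannot work. The eigenvalue relations of Lemma~\ref{lem.components} --- the pairing $F\simeq F^*\otimes L$ and the identity $\mathcal{O}(-K_X)=L^{\otimes(n+1)}$ --- constrain the multiset $\{\xi_1,\dots,\xi_{2n}\}$ in terms of $\xi_0$, but they never force $\xi_0=1$: in the proof of that lemma the case $\xi_0\neq 1$ is explicitly entertained (it is used only to bound the codimension of $Z^g$), and in Example~\ref{exa.quot} the stabilizer of a fixed point acts on $T_xX/F_x$ by $-1$ while every one of those numerical relations is satisfied. So ``controlling $\xi_0$'' through the contact numerology is hopeless; whatever forces triviality must be extracted from the invariance hypothesis itself, as the paper does, and your framing of the two conditions as ``logically independent'' is exactly what steers you away from this. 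Two lesser points: your necessity argument for (2) writes $g\cdot\vartheta=\lambda_g\vartheta$ with $\lambda_g$ a constant character, which needs $H^0(X,\mathcal{O}_X^*)=\C^*$ --- not available since the theorem is not restricted to projective $X$; the paper avoids this by noting that $\vartheta=\pi^*\vartheta'$ is invariant outright. Finally, where you inherit nondegeneracy of $d\vartheta'$ \'etale-locally, the paper instead excludes a degeneracy divisor $B$ by a torsion argument in $\Pic((X/G)_{sm})$; your shortcut is legitimate given $\mathcal{X}=\pi^{-1}((X/G)_{sm})$, but it is the only place where your route genuinely differs, and it is not where the difficulty lies.
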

\begin{proof}
	First, we will show that if the conditions are satisfied, then we can construct a contact structure on the quotient. Recall the result of Boutot \cite[Corollaire, p.2]{Bou87} who showed that a quotient of a variety with rational singularities by a reductive group action still has rational singularities. Let $\pi \colon X \rightarrow X/G$ be the quotient map. As before, $Z^g$ is the locus of points of $X_{sm}$ fixed by $g \in G \setminus \{ \id \}$ and let $Z = \bigcup_{g \in G \setminus \{\id \}} Z^g$. Any component of~$Z$ has codimension at least 2 by Lemma~\ref{lem.components}.\par
	Observe now that the listed conditions imply that $L$ descends to the quotient. Indeed, let $x \in X$ be any point with a nontrivial stabilizer $\stab(x) \subset G$. If $x\in X_{sm}$ then the invariance of the form implies that the action of $\stab(x)$ on $L_x$ is trivial. Indeed, we have a direct sum decomposition of $G$-modules: $T_x X = F_x \oplus L_x$ and $\vartheta_x$ is the projection onto the second summand. If there were $g \in \stab(x)$ acting nontrivially on $L_x$, then the projection from $T_xX$ onto $L_x$ (i.e.~$\vartheta$) would not commute with the action of $g$ and this is not the case. \par
	If $x \in X_{sing}$, then the triviality of the action of $\stab(x)$ on $L_x$ is directly assumed in the first listed condition and we claim the existence of the descent by the Kempf's lemma \cite[Thm~2.3]{DN89}. The resulting line bundle $L'$ will be the contact line bundle for $X/G$. \par
	The second listed condition implies that the map of the descended bundles induced from~$\vartheta$ is an epimorphism. Recall that $\mathcal{X}$ is precisely the preimage of $(X/G)_{sm}$. We have a sequence of maps $\pi^* T(X/G)_{sm} \simeq T\mathcal{X} \rightarrow L_{|\mathcal{X}} \simeq \pi^* L'_{|\mathcal{X}}$ that along with the invariance of $\vartheta$ allows us to put $\vartheta' \colon T(X/G)_{sm} \rightarrow L'_{|(X/G)_{sm}}$. We define the induced contact distribution $F'$ to be $\ker (\vartheta')$. \par 
	To conclude the proof in one direction, suppose that $\vartheta'$ is degenerate along some locus $B$, that necessarily is an effective divisor. Then we have $\mathcal{O}_{(X/G)_{sm}}(-K_{(X/G)_{sm}}) = (L'_{|(X/G_{sm})})^{\otimes(n+1)} \otimes \mathcal{O}_{(X/G)_{sm}}(-B)$ in the Picard group of $(X/G)_{sm}$. We pull back this relation to $\mathcal{X}$, but it can hold only if $\pi^* \mathcal{O}_{(X/G)_{sm}}(B)= \mathcal{O}_{\mathcal{X}} $, as $\mathcal{X}$ is a smooth subset of a contact variety, whose complement has codimension at least 2. Triviality of pullback implies that $\mathcal{O}_{(X/G)_{sm}}(-B)$ is a torsion element of the Picard group. But a nontrivial effective Weil divisor cannot give a torsion element in the Picard group, so we must have $B=0$. Consequently, $d\vartheta'$ is nowhere degenerate on the smooth locus and we conclude that if two listed conditions are satisfied, then the quotient has an induced contact structure. \par 
	Going the other way around, if we have a contact structure $(F', L', \vartheta')$ on $X/G$ induced from the one on $X$, that means in particular that $\pi^* L' = L$, so $L$ has a descent, so the second condition is satisfied. Moreover, $\vartheta$ on $X$ comes from the pullback of $\vartheta'$, so it is $G$-invariant.
\end{proof}
Observe that in the case when $X$ is smooth, the second assumption is void, so in particular we only need to check the invariance of $\vartheta$ to obtain a quotient variety that is contact. It will serve us to construct two examples of contact varieties, namely Example~\ref{exa.fav} and Example~\ref{exa.p5}. \par 
 One stark difference with the symplectic case is that to prove Theorem~\ref{thm.quotient} it was not enough to assume the invariance of the form and we needed an additional assumption on the triviality of the action on the fibers over the singular locus. Clearly, we use it to see that the contact line bundle descends to the quotient. One could hope that it is possible to get rid of this assumption by the use of the stratification theorem, if the induced contact forms on the strata were $G$-invariant. However, there is no reason to believe that this is the case and the twist of the contact form constitutes an unavoidable complication in the study of the contact varieties.
\section{Projective contact varieties}
From now on, we will restrict ourselves to considering varieties that are projective. Just like in the smooth case, the existence of a contact structure results in having negative Kodaira dimension and being uniruled. We also study the relation between birational morphisms that are crepant and those that preserve the contact structure, analogously to the folklore equivalence between symplectic and crepant resolutions. Finally, we construct some higher dimensional examples, classify projective threefolds and give an instance of a Campana-Flenner threefold that does not satisfy our definition.
\subsection{Uniruledness}
In the smooth case, the uniruledness of projective contact manifolds is a consequence of the theorem of Demailly. He proved some conditions that imply the integrability of a distribution defined as the kernel of the twisted form \cite[Main theorem]{Dem02}. The consequence of this result is that for a projective contact manifold the canonical divisor is not pseudoeffective \cite[Cor. 2]{Dem02}, so in turn such manifold is uniruled by \cite[Thm 2.2]{BDPP13}. The theorem of Demailly was very recently generalized by Cao and H\"oring \cite{CH22}, so that we can claim:
\begin{prop}\label{prop.not.psef}
	Let $(X,F,L,\vartheta)$ be a projective contact variety. Then $L^*$ is not pseudoeffective. Consequently, $X$ is uniruled and it admits a Mori contraction.
\end{prop}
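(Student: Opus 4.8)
The plan is to reduce the statement to the generalization of Demailly's theorem due to Cao and Höring, which applies to the singular setting. Recall that $X$ is a projective contact variety of dimension $2n+1$, equipped on its smooth locus with the contact distribution $F \subset TX_{sm}$ arising as the kernel of $\vartheta$. First I would recall the key input: the result of \cite{CH22} provides a criterion for when a foliation (or distribution defined away from a small locus) fails to have pseudoeffective canonical/conormal bundle, generalizing \cite[Main theorem]{Dem02} to varieties with mild singularities. Since $X$ has rational Gorenstein canonical singularities by Proposition~\ref{prop.cartier}, and the singular locus has codimension at least $2$ by normality, the distribution $F$ is defined on the complement of a small set, which is exactly the kind of datum to which the Cao--Höring machinery should apply.

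The core step is to show that $L^*$ is not pseudoeffective. The conceptual reason, as in the smooth case, is that if $L^*$ were pseudoeffective, then the contact distribution $F$ would be forced to be integrable, contradicting the nondegeneracy of $d\vartheta$ on $\bigwedge^2 F$ which is precisely the statement that $F$ is maximally non-integrable. So the argument is a proof by contradiction: assume $L^* = \mathcal{O}(-L)$ is pseudoeffective, and feed the contact exact sequence on $X_{sm}$ into the integrability criterion of \cite{CH22}. The output should be that the twisted form $\vartheta$ defines an integrable foliation, which is incompatible with $d\vartheta$ being nowhere degenerate. The main obstacle is to verify that the hypotheses of the Cao--Höring theorem are genuinely met in our singular setting: one must check that their notion of foliation with pseudoeffective conormal sheaf matches the data $0 \to F \to TX_{sm} \to L_{|X_{sm}} \to 0$, and that extending across the codimension-$\ge 2$ singular locus causes no loss, which is where normality and the Hartogs-type extension of sections and the equality $\mathcal{O}(-K_X) = L^{\otimes(n+1)}$ from Proposition~\ref{prop.cartier} do the work.

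Once $L^*$ is shown to be non-pseudoeffective, the uniruledness follows in two moves. Since $-K_X = (n+1)L$ as Cartier divisors, non-pseudoeffectivity of $L^*$ is equivalent to non-pseudoeffectivity of $K_X$; indeed $K_X$ is $\mathbb{Q}$-linearly (in fact integrally) a positive multiple of $L^*$, and pseudoeffectivity is preserved under positive scaling, so $K_X$ is not pseudoeffective. Then I would invoke the characterization of uniruledness in terms of the canonical class: by \cite[Thm 2.2]{BDPP13}, a projective variety (with the appropriate mild singularity hypotheses, which our Gorenstein canonical $X$ satisfies) with non-pseudoeffective canonical divisor is uniruled.

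Finally, for the existence of a Mori contraction, I would observe that a uniruled projective variety with canonical singularities and $K_X$ not nef admits a $K_X$-negative extremal ray by the cone theorem, and contracting it produces a Mori contraction. Concretely, since $X$ is $\mathbb{Q}$-Gorenstein with canonical (hence log terminal for the pair $(X,0)$) singularities, the Mori program applies: the cone theorem yields extremal rays, and because $K_X$ is not pseudoeffective there must be a $K_X$-negative extremal ray whose associated contraction is the desired Mori contraction. I expect the only delicate point in this last paragraph to be confirming that $X$ satisfies the singularity hypotheses needed to run the cone and contraction theorems, but this is immediate from Proposition~\ref{prop.cartier}, which guarantees $(X,0)$ is canonical.
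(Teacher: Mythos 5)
Your proposal is correct and follows essentially the same route as the paper: apply the Cao--H\"oring generalization of Demailly's theorem \cite[Thm 1.2]{CH22} to the reflexive rank-one sheaf $L^*$ on the klt (indeed canonical, by Proposition~\ref{prop.cartier}) variety $X$, derive a contradiction with the non-integrability of $F$ if $L^*$ were pseudoeffective, then use $K_X = (n+1)L^*$ together with \cite[Thm 2.2]{BDPP13} to conclude uniruledness and the existence of a $K_X$-negative ray. The only cosmetic difference is that the paper verifies the hypotheses of \cite{CH22} directly as ``normal compact K\"ahler space with klt singularities plus reflexive rank-one sheaf,'' whereas you reach the same verification via the Gorenstein canonical property; both are immediate from Proposition~\ref{prop.cartier}.
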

\begin{proof}
	A projective contact variety is a normal compact K\"ahler space with klt singularities and a line bundle $L^*$ is a reflexive sheaf of rank one, so the assumptions of \cite[Thm 1.2]{CH22} are satisfied. In consequence, if $L^*$ were pseudoeffective, then the kernel $F$ of the twisted form $\vartheta \in H^0(X, (\Omega^1_X)^{**} \otimes L)$ would define a foliation, but it is absurd, as $F$ is non-integrable. \\
	As $L^*$ so also $K_X$ are not pseudoeffective, \cite[Thm 2.2]{BDPP13} implies that $X$ admits a moving family of rational curves intersecting negatively with $K_X$, so it is uniruled and the Mori cone has a nontrivial $K_X$-negative ray.
\end{proof}
In the singular case the Cone Theorem \cite[Thm 3.7]{KM98} states that any curve $C$ spanning an extremal ray has bounded intersection with $-K_X$. It can be translated to the condition that $L \cdot C \in \{1,2,3\}$ for the contact line bundle $L$. In the smooth case the bound is stronger and moreover, it allowed to give a full classification of possible Mori contractions of projective contact manifolds. In particular, it can be shown that they do not admit any birational Mori contractions \cite[Lem 2.10]{KPSW} that in turn gives a partial classification of manifolds themselves \cite[Thm 1.1]{KPSW}. Unfortunately, in our setting such result is currently out of reach. The reason is that for rational curves on singular varieties we cannot present the bound on the dimension of the space of morphisms \cite[Ch. I, Thm 2.16]{Ko96} in terms of cohomology. Consequently, the locus-fiber inequality \cite[Ch. IV, 2.6.1]{Ko96}, that is a fundamental tool in the proof of \cite[Thm 1.1]{KPSW}, does not take an easily computable form. Thus, we cannot determine the dimensions of the fibers, nor whether there are birational Mori contractions. However, it is easy to see that projective singular contact varieties that admit a crepant resolution have $L \cdot C = 1$ and the associated Mori contraction is of fiber type, induced from the projection $\p(TM) \rightarrow M$ of the resolving projective contact manifold. We will analyze such contractions during our classification of projective threefolds in Section~\ref{section.threefolds}.
	\subsection{Birational morphisms and resolutions of singularities}
	We begin our discussion by a slight strengthening of already mentioned result on the nonexistence of birational Mori contractions of projective contact manifolds \cite[Lem 2.10]{KPSW}.
		\begin{thm}\label{thm.contact.is.crepant}
		Let $f \colon \widetilde{X} \rightarrow X$ be a birational morphism from a projective contact \textbf{manifold} $\widetilde{X}$ to a variety $X$ with canonical and Gorenstein singularities (e.g. $X$ is a projective contact variety and $\widetilde{X}$ a resolution by a contact manifold). Then $f$ is crepant.
	\end{thm}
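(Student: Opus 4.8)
I would prove crepancy by reducing it to a numerical statement about $f$-contracted curves and then feeding that statement into the contact geometry of $\widetilde{X}$. Since $\widetilde{X}$ is smooth and $X$ is Gorenstein, $f^*K_X$ is a well-defined Cartier divisor and the discrepancy divisor $D := K_{\widetilde{X}} - f^*K_X = \sum_j a_j E_j$ is an \emph{integral}, $f$-exceptional divisor; by the canonicity of $X$ all $a_j \ge 0$, so $D \ge 0$. Crepancy is the assertion $D = 0$. The first step is to invoke the negativity lemma \cite[3.39]{KM98}: an effective, $f$-exceptional divisor that is also $f$-nef must vanish, so if $D \neq 0$ there is a curve $C$ contracted by $f$ with $D \cdot C < 0$. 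Because $f^*K_X \cdot C = 0$, this reads $K_{\widetilde{X}} \cdot C < 0$, equivalently $\widetilde{L} \cdot C > 0$ for the contact line bundle $\widetilde{L}$ of $\widetilde{X}$ (recall $-K_{\widetilde{X}} = (n+1)\widetilde{L}$). Thus it suffices to show that no contracted curve can have positive contact degree.

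The second step upgrades this to a \emph{rational} curve. As $X$ is Gorenstein and canonical it has rational singularities, so the fibres of the birational morphism $f$ are rationally chain connected (see \cite{Ko96}); moreover the exceptional divisor $E_{j_0}$ carrying a negative intersection with $D$ is uniruled and covered by curves contracted by $f$. Spreading the strict inequality $D \cdot C < 0$ over a covering family and passing to an irreducible rational member, I would obtain a rational curve $C \cong \p^1$ contained in a fibre of $f$ with $\widetilde{L}\cdot C \ge 1$. Since $C$ is contracted by the birational map $f$, its deformations are confined to the proper closed exceptional locus: $C$ is \emph{not} a free curve whose deformations dominate $\widetilde{X}$. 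This confinement is the numerical handle I want to contradict.

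The final and hardest step is the contact-geometric contradiction, which is exactly the content (and the difficulty) of \cite[Lem. 2.10]{KPSW} in the smooth Mori-contraction case. Restricting the contact sequence $0 \to F \to T\widetilde{X} \to \widetilde{L} \to 0$ along $\nu\colon \p^1 \to C$, the $\widetilde{L}$-valued symplectic form on $F$ forces $\nu^*F$ to have splitting type symmetric about $\tfrac{1}{2}(\widetilde{L}\cdot C)$, which constrains the negative part of $\nu^* T\widetilde{X}$; the plan is to combine this with the maximal non-integrability of $F$ (as in Demailly's integrability analysis \cite{Dem02}) to force $C$ to deform so as to cover $\widetilde{X}$, contradicting that $C$ lies in a fibre. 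I expect this to be the main obstacle, for two reasons. First, positivity of $\widetilde{L}\cdot C$ alone is not enough: the splitting symmetry permits an \emph{isotropic} configuration (with $TC \subset F$ and negative summands surviving in $\nu^*F$), so one genuinely needs the non-integrability of the contact distribution rather than merely its existence — the degenerate behaviour in Example~\ref{exa.quot} shows that the mere presence of the distribution can be compatible with the form collapsing. Second, extracting a clean bend-and-break/deformation argument showing such a curve must sweep out $\widetilde{X}$ is the delicate analytic heart of the KPSW-type non-existence statement; everything before it is formal. Once that statement is established, the negativity lemma gives $D \cdot C = 0$ for all contracted curves, hence $D$ is $f$-numerically trivial and effective and exceptional, so $D = 0$ and $f$ is crepant.
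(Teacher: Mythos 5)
Your opening step is exactly right and matches the paper: write $K_{\widetilde{X}} = f^*K_X + D$ with $D = \sum a_j E_j$ effective (canonicity) and Cartier (Gorenstein), and use the negativity lemma \cite[Lem.~3.39]{KM98} to reduce crepancy to showing that no $f$-contracted curve meets $D$ negatively, i.e.~that no contracted curve is $K_{\widetilde{X}}$-negative. But from that point on your argument has a genuine gap: the entire contact-geometric heart of the proof --- forcing a contradiction from a contracted curve $C$ with $\widetilde{L}\cdot C > 0$ via splitting types, non-integrability of $F$, and bend-and-break --- is left as a plan whose difficulties you yourself flag (``I expect this to be the main obstacle''). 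A proof that defers its decisive step to an unexecuted sketch is not a proof, and your own observation about the isotropic configuration shows the curve-by-curve approach is delicate precisely where you stop.

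The missing idea, and the paper's actual route, is a reduction that lets you quote \cite[Lem.~2.10]{KPSW} as a black box instead of reproving it. You treat that lemma as applicable only to ``the smooth Mori-contraction case'' and therefore try to redo its deformation theory for an arbitrary contracted curve; instead, apply the Relative Cone Theorem \cite[Thm~3.25]{KM98} to the birational morphism $f$: since some contracted curve is $K_{\widetilde{X}}$-negative, there exists a $K_{\widetilde{X}}$-negative extremal ray whose contraction $\cont_R$ factors $f$. Because $f$ is birational, $\cont_R$ cannot be of fiber type, so it is a \emph{birational} extremal contraction of the projective contact manifold $\widetilde{X}$ --- exactly what \cite[Lem.~2.10]{KPSW} forbids. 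This also makes your second step (rational chain connectedness of fibers, spreading the inequality over a covering family) unnecessary: the cone theorem already produces the needed rational curve in an extremal ray, packaged in a form where the cited lemma applies verbatim.
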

	\begin{proof}
	By the assumptions on $X$, we can write
	\begin{displaymath}
		K_{\widetilde{X}} = f^*K_X + \sum_i a_i E_i,
	\end{displaymath}
	where $a_i \ge 0$. Then $D = \sum a_i E_i$ corresponds to an effective Cartier divisor and $f_*(-D)=0$, so $f_*(-D)$ is also effective. Suppose that $D$ is $f$-nef. In such case we obtain by the negativity lemma \cite[Lem.~3.39]{KM98} that $-D$ is effective, so $D=0$ and we are done. \par 
	Now assume that $D$ is not $f$-nef, so there exists a curve $C$ contracted by $f$ that intersects $D$ negatively. We can write
	\begin{displaymath}
		K_{\widetilde{X}} \cdot C  = f^*K_X \cdot C + D \cdot C = 0 + D \cdot C.
	\end{displaymath} 
	It follows that $K_{\widetilde{X}} \cdot C < 0$, i.e.~$K_{\widetilde{X}}$ is not $f$-nef. By the Relative Cone Theorem \cite[Thm~3.25]{KM98} there exists a $K_{\widetilde{X}}$-negative ray, whose contraction factors $f$, however by \cite[Lem.~2.10]{KPSW} such maps cannot be birational. We reached a contradiction, so~$D$ must necessarily be $f$-nef and in this case we have our conclusion.
\end{proof}
	To provide a converse for this statement, we first need to discuss the notion of sheaf reflexivity. A coherent sheaf is \textit{reflexive} if it is isomorphic to its second dual. Hartshorne showed (\cite[Prop. 1.6]{Ha80}) that a sheaf $\mathcal{F}$ on a normal and integral scheme $Y$ is reflexive if and only if it is torsion free and satisfies the Barth normality condition, i.e.~for any open $U \subset Y$ and closed $Z \subset U$ of codimension at least 2 the restriction $\mathcal{F}(U) \rightarrow \mathcal{F}(U \setminus Z)$ is bijective, that is sections defined outside of $Z$ posses a unique extension to it. For a quasi-projective variety with canonical singularities, the pushforwards of sheaves of differentials from the resolution are reflexive by \cite[Thm 1.4]{GKKP11}. As the pushforward sheaf is defined over any open $U \subset X$ by $g_*\Omega^p_{\widetilde{X}}(U) = \Omega^p_{\widetilde{X}}(g^{-1}(U))$, then the reflexivity essentially means that if the codimension of the singular locus is at least 2, then any differential form on the smooth locus of $X$ extends uniquely to the everywhere defined differential form on the resolution. It allows us to prove the following:
	\begin{thm}\label{thm.crepant.is.contact}
		Let $(X,F,L, \vartheta)$ be a (quasi-)projective contact variety. Suppose that $f: X' \rightarrow X$ is a birational and crepant morphism. Then $X'$ is again a contact variety with the distribution $F'$ such that $f_* F' = F$ on the smooth locus of $X$ and the contact line bundle is $f^*L$. In particular, a terminalization of a contact variety is contact and a crepant resolution of singularities produces a classical contact manifold.
	\end{thm}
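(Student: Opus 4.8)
The plan is to verify directly the three requirements of Definition~\ref{defn.contact} for the pair $(X', f^*L)$: that $X'$ is a normal variety of dimension $2n+1$ with rational Gorenstein singularities, and that $f^*L$ carries a twisted form inducing a contact structure on $X'_{sm}$. Dimension and normality are immediate, since $f$ is birational and crepancy already presupposes $X'$ normal. For the singularities, note that for any divisor $E$ over $X'$ the crepancy identity $K_{X'} = f^*K_X$ gives $a(E,X') = a(E,X)$, so $X'$ inherits the canonical singularities of $X$; and since $X$ is Gorenstein, $K_X$ is Cartier, hence $K_{X'} = f^*K_X$ is Cartier and $X'$ is Gorenstein, therefore has rational singularities. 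It remains to produce the twisted form.

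Second, I would construct $\vartheta'$ by passing through a resolution. Choose a resolution $g \colon W \to X'$ that is an isomorphism over $X'_{sm}$, so that $p := f \circ g \colon W \to X$ is a resolution of $X$. Because $X$ has canonical singularities, the extension theorem for reflexive differentials (\cite[Thm 1.4]{GKKP11}, in the form recalled just above) lets me extend the twisted one-form $\vartheta \in H^0(X_{sm}, \Omega^1_{X_{sm}} \otimes L)$ --- locally a genuine one-form after trivializing $L$ --- to a form $\widetilde{\vartheta} \in H^0(W, \Omega^1_W \otimes p^*L)$. Writing $p^*L = g^*(f^*L)$ and combining the projection formula with the reflexivity $g_*\Omega^1_W = \Omega^{[1]}_{X'}$ (again \cite{GKKP11}, using that $X'$ is canonical), the pushforward $g_*\widetilde{\vartheta}$ is a section of $\Omega^{[1]}_{X'} \otimes f^*L$, whose restriction to the smooth locus is an honest twisted form $\vartheta' \in H^0(X'_{sm}, \Omega^1_{X'_{sm}} \otimes f^*L)$. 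By construction $\vartheta'$ agrees with the pullback of $\vartheta$ over the open set where $f$ is an isomorphism, so its kernel $F'$ pushes forward to $F$ on $X_{sm}$; this is the candidate contact structure, with contact line bundle $f^*L$.

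Third, and this is where crepancy does the essential work, I would verify nondegeneracy, i.e.\ that $\vartheta' \wedge (d\vartheta')^{\wedge n}$ has no zeroes on $X'_{sm}$. The point is that this top form naturally becomes a function: by crepancy $(f^*L)^{\otimes(n+1)} = f^*\mathcal{O}(-K_X) = \mathcal{O}(-K_{X'})$, so $\vartheta'\wedge(d\vartheta')^{\wedge n}$ is a section of $\Omega^{2n+1}_{X'_{sm}} \otimes (f^*L)^{n+1} = \mathcal{O}_{X'_{sm}}$, that is, a regular function $s'$ on $X'_{sm}$, which extends to all of $X'$ by normality. On $X$ the analogous form $s = \vartheta \wedge (d\vartheta)^{\wedge n}$ is a regular function that is nowhere zero on $X_{sm}$; since its zero locus would be pure of codimension $1$ yet contained in $\Sing X$ of codimension $\ge 2$, it must be empty, so $s$ is nowhere vanishing on $X$. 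The two functions are both regular and agree on the dense open where $f$ is an isomorphism, hence $s' = f^*s = s \circ f$ on all of $X'$; as $s$ is nowhere zero, so is $s'$. Thus $d\vartheta'$ is nondegenerate and $(X', F', f^*L, \vartheta')$ is a contact variety. The two special cases then follow at once: a terminalization is crepant by definition, and a crepant resolution is moreover smooth, yielding a classical contact manifold.

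The main obstacle I expect is the nondegeneracy across the $f$-exceptional locus, which may well be a divisor in $X'$ and on which the pulled-back form is not controlled a priori; the resolution is to repackage $\vartheta' \wedge (d\vartheta')^{\wedge n}$ as a regular function via crepancy and compare it with the nowhere-vanishing top form of $X$, the codimension bound on the singular locus being exactly what forbids a degeneracy divisor. A secondary technical point is the extension of $\vartheta$ across the exceptional divisors when building $\vartheta'$, which genuinely requires the reflexive extension theorem for canonical singularities and cannot be reduced to a codimension-two argument on $X'$ alone.
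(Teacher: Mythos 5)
Your proposal follows essentially the same route as the paper's proof: extend $\vartheta$ through a resolution using the reflexivity of pushforwards of differential sheaves from \cite{GKKP11}, restrict to $X'_{sm}$ to obtain $\vartheta'$, and then use crepancy to identify $\vartheta' \wedge (d\vartheta')^{\wedge n}$ with the pullback of the nowhere-vanishing top form on $X$, so that nondegeneracy (and surjectivity of $\vartheta'$) follows at once. Your explicit verification that $X'$ has canonical, Gorenstein, hence rational singularities is a point the paper leaves implicit, and it is correctly wired through the discrepancy identity $a(E,X') = a(E,X)$ --- just note that rationality follows from \emph{canonical} (by \cite[Thm~5.22]{KM98}), not from Gorenstein, as your phrasing suggests.

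The one genuine loose end is the assertion that $f_* F' = F$ on \emph{all} of $X_{sm}$. You justify it by saying that $\vartheta'$ agrees with the pullback of $\vartheta$ over the open set where $f$ is an isomorphism; but a priori that open set need not contain $f^{-1}(X_{sm})$, so your argument only gives the equality over $f\bigl(X' \setminus \Exc(f)\bigr)$. What is needed --- and what the paper establishes as the first step of its proof --- is that $f(\Exc(f)) \subset X_{sing}$, i.e.\ that $f$ is an isomorphism over the entire smooth locus. This again comes from crepancy: near any point of $X_{sm}$ the base is smooth, hence terminal and $\mathbb{Q}$-factorial, so any part of $\Exc(f)$ lying over $X_{sm}$ would contain a divisor (purity of exceptional loci over $\mathbb{Q}$-factorial bases) of discrepancy $0$, contradicting terminality. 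Adding this observation closes the gap; the rest of your argument stands as written.
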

	\begin{proof}
		First observe that we have $f(\Exc(f)) \subset X_{sing}$. Indeed, if we had a component $Y \subset f(\Exc(f)) \cap X_{sm}$, then $X$ would be terminal and $\mathbb{Q}$-factorial in the neighbourhood of any point of the locally closed set $Y$, so $f$ could not be crepant.
		
		Let $g \colon \widetilde{X} \rightarrow X'$ be a resolution of singularities of $X'$, which composed with $f$ becomes a resolution of singularities of $X$. As $\vartheta$ is defined on the smooth locus of $X$, it can be considered as a section of the sheaf $((fg)_*\Omega^1_{\widetilde{X}}) \otimes L$. This sheaf is reflexive, as it is a product of $(fg)_* \Omega^1_{\widetilde{X}}$ that is reflexive by \cite[Thm 1.4]{GKKP11} and the line bundle $L$. Moreover, by the projection formula we can identify this product sheaf with  $(fg)_*(\Omega^1_{\widetilde{X}} \otimes (fg)^* L)$. Now, as $X$ is normal, its singular locus has codimension at least 2, therefore $\vartheta$ extends to a (global) section of $\Omega^1_{\widetilde{X}} \otimes (fg)^*L$. As $\widetilde{X}$ is a resolution of singularities of $X'$, we have an isomorphism $\widetilde{X} \setminus \Exc(g) \simeq X_{sm}'$, which allows us to define $\vartheta'$ on the smooth locus of $X'$. \\
		Now recall that $\vartheta \wedge (d\vartheta)^{\wedge n}$ can be extended to the whole $X$ and it is a nowhere vanishing section of $\mathcal{O}(K_X) \otimes L^{\otimes (n+1)}$. We pull back this section to $X'$, where thanks to $f$ being crepant it defines an isomorphism between $\mathcal{O}(-K_{X'})$ and $f^*L^{\otimes (n+1)}$. Moreover, on the intersection of $X_{sm}'$ and any open $U \subset X'$ trivializing $f^*L$ this section agrees with $\vartheta' \wedge (d\vartheta')^{\wedge n}$. \\
		Finally observe that $\vartheta'$ is surjective on $X_{sm}'$. If this were not the case at some point $x \in X_{sm}'$, then $\vartheta' \wedge (d\vartheta')^{\wedge n}$ would be zero at $x$, which is absurd. 
	\end{proof}
\subsection{Two higher dimensional examples}
To see an application of Theorem~\ref{thm.quotient} and Theorem~\ref{thm.crepant.is.contact} we will now construct two examples of contact varieties by the means of the finite quotient and discuss their particular resolutions. Interestingly enough, both admit an alternate description as unions of projectivized nilpotent orbits, that we will present next.
\begin{exa}[Author's favourite example]\label{exa.fav}
	Start with an affine space with fixed even dimension $\C^{2n+2}$ and a symplectic form: $dx_0 \wedge dx_{n+1} + ... + dx_n \wedge dx_{2n+1}$. Consider a finite group $\widehat{G}$ of symplectomorphisms generated by $\xi_i$ for $i=1,...,n$, where $\xi_i$ acts on a vector by multiplying its $i$-th and $i+n+1$-th coordinate by $(-1)$, i.e.:
	\begin{displaymath}
		\xi_i (x_0,...,x_{2n+1}) = (x_0,...,x_{i-1}, -x_i,x_{i+1},...,x_{i+n}, -x_{i+n+1},x_{i+n+2},...,x_{2n+1}).
	\end{displaymath}
	Note that $\widehat{G}$ is a subgroup of $T^{2n+2}$, i.e. the torus giving $\C^{2n+2}$ the structure of a toric variety. Now take the associated projective space $\p^{2n+1}$ equipped with an induced contact form $\vartheta = \sum_{i=0}^{n+1} (x_i dx_{i+n+1} - x_{i+n+1}dx_i)$. Observe that the action of $\widehat{G}$ descends to the projective space, it is a subgroup of the torus $T^{2n+1} \subset \p^{2n+1}$ and it preserves the twisted form. Denote the resulting group of contactomorphisms by $G \simeq \Z_2^n$ and consider the quotient $X = \p^{2n+1}/G$, which by Theorem \ref{thm.quotient} is a contact variety. \\
	To describe $X$ explicitly as a toric variety, let $M = \Hom(T^{2n+1}, \C^*)$ be the lattice of characters and $N \simeq \Z^{2n+1}$ the dual lattice spanned by $e_1,...,e_{2n+1}$. Then $\p^{2n+1}$ is described by $\Sigma \subset N_{\R}$. It is a complete fan having $\rho_1 = e_1,...,\rho_{2n+1} = e_{2n+1}, \rho_0 = -e_1 -... - e_{2n+1}$ as rays and such that every subset of rays is a cone. Now, finite subgroups of torus correspond bijectively to finite index sublattices $\iota: M' \hookrightarrow M$, so dually we have an inclusion $N \rightarrow N'$ with a finite cokernel (equal to $G$), so that $N_\R = N'_\R$. Therefore, to obtain the fan $\Sigma'$ of the quotient variety take the image of $\Sigma$ via the adjoint $\pi$ of the inclusion $\iota$. Our choice of generators of $M'$ will be: $w_i = e^*_i + e^*_{i+ n+1}$ and $w_{n+i+1} = e^*_i - e^*_{i+n+1}$ for$n =1,...,n$ and $w_{n+1} = e^*_{n+1}$, so that the matrix of $\iota$ is symmetric and in consequence it is also the matrix of the adjoint map $\pi$.\\
	The fan of $X$ is spanned by the following $2n+2$ rays: $\rho'_i = e_i + e_{i+ n+1}$ and $\rho'_{n+i+1} = e_i - e_{i+n+1}$ for $n =1,...,n$, $\rho'_{n+1} = e_{n+1}$ and $\rho'_0 = -2 \cdot (e_1 +... + e_n) - e_{n+1}$ and any subset of rays forms a cone. We denote a cone of the form $\cone(\rho'_i, \rho'_{n+i+1})$ for $i=0,...,n$ (mind the case $i=0$) by $\sigma_i$. Every such cone is singular as its generators cannot be extended to a basis of a whole lattice, and it corresponds to a codimension 2 singular subvariety of $X$, that is the image of $\p^{2n-1} = \{[x_0:...:x_{2n+1}] | x_i = x_{i+n+1} = 0\}$ via the quotient map.\\
	Toric resolution of singularities are provided by particular refinements of the fans. In our case, where every singular cone contains some $\sigma_i$ as a subcone, we first define $n+1$ new rays $\rho'_{E_i} = \frac{1}{2}(\rho'_i +\rho'_{i+n+1})$. Then we divide every $\sigma_i$ onto two cones: $\sigma'_i = \cone(\rho'_i, \rho'_{E_i})$ and $\sigma''_i = \cone(\rho'_{n+i+1}, \rho'_{E_i})$. We do analogously for every cone containing any $\sigma_i$ as a subcone. \\
	In this way we obtain a smooth projective toric variety $\widetilde{X}$ that maps onto $X$ and does not change the smooth locus, i.e.~a resolution of singularities. Moreover, one can compute that the pullback of $K_X$ is the canonical divisor of $\widetilde{X}$, so the resolution is crepant. Now, by \cite[Th\'eor\`eme, p.1]{Dr99} and Theorem~\ref{thm.crepant.is.contact} $\widetilde{X}$ has to be isomorphic to $\p(T(\p^1 \times ...\times \p^1))$. It can also be seen directly, as described in \cite[Prop. 7.3 and (7.6')]{Oda78}. We take the lattice $N'$ and project it by restricting to last $n+1$ coordinates. Then the image of $\Sigma'$ is a product fan for $n+1$ copies of $\p^1$, so $\widetilde{X}$ is equipped with a projection onto $\p^1 \times ... \times \p^1$. The kernel of the lattice projection contains a standard fan of $\p^{n}$, and this is a fiber of this projection. Toric computations show that this bundle is in fact the projectivization of $\mathcal{O}(2,...,2) = T\p^1 \times ... T\p^1$.
\end{exa}
Note that in higher dimensions the resolution that we have constructed is nonstandard. Indeed, according to the algorithm we should first blow up singular cones of higher dimension, i.e. maximal nontrivial intersections of $\sigma_i$, as such cones correspond to higher codimension components of singular locus. However, such resolution would not be crepant. \\
Additional significance of the example above comes from the fact that it allows us to provide a link between $\p^{2n+1}$ and $\p(T(\p^1 \times ... \times \p^1))$, i.e. two disjoint families of contact manifolds via our notion of a contact variety. It is an interesting question whether other contact manifolds can be linked similarly. 
\begin{exa}\label{exa.p5}
	Now take $X= \p^5$ with the contact and toric structure as described in the previous example and let $\xi$ be the generator of $\Z_2$. Assume that $\Z_2$ acts on $X$ by: $\xi \cdot [x_0:x_1:x_2:x_3:x_4:x_5] = [x_0:-x_1:x_2:x_3:-x_4:x_5]$ and consider the quotient of $X$ by this action. Then the fixed point locus has two components, described by $x_1 = x_4 = 0$ ($\p^3$) and $x_0 =x_2 =x_3= x_5 = 0$ ($\p^1$), which get mapped isomorphically onto two components of the singular locus in the quotient. The (toric) resolution is provided by two disjoint blow-ups centered at those components. One can check that the resulting smooth variety $\widetilde{X}$ has no chance of being contact, as its canonical divisor is not divisible by 3 in the class group. Nevertheless, the partial resolution obtained by blowing up just the bigger component is crepant, so it is another projective contact variety, call it $X'$. One can verify that $\widetilde{X}$ admits a twisted form that maps $T\widetilde{X}$ onto the pullback of the contact line bundle, but the nondegeneracy condition is satisfied on some open subset (equal to the complement of the exceptional divisor). Such varieties are sometimes called \textit{generically contact}.
\end{exa}
On the other hand, the second example shows that in dimensions higher than 3 it is not possible to reduce the study of projective contact varieties to the analysis of crepant contractions of projective contact manifolds. 
\begin{rmk}\label{rmk.example}
	The language of nilpotent orbits allows us to give an alternate description of both described examples. To begin with, consider the simple group $\SL(2)$. Its Lie algebra $\mathfrak{sl}(2)$ is 3 dimensional and has the 2-dimensional nilpotent cone:
	\begin{displaymath}
		N = \left\{ \begin{bmatrix}
			x & y \\
			z  &-x
		\end{bmatrix} \mid x^2 +yz =0 \right\} = \SL(2) \cdot E_{1,2},
	\end{displaymath}
	that -- when we pass to the projectivization of the algebra -- gets mapped to $\p^1$, that is simultaneously the minimal and the principal orbit. Now take a product of $(n+1)$ copies of $\SL(2)$. It is a semisimple group whose algebra is a direct sum of $(n+1)$ copies of $\mathfrak{sl}(2)$ and the adjoint action is component-wise. In particular, the nilpotent cone is the sum of nilpotent cones of the components. Every nilpotent orbit has generator of the form $(m_0,m_1,...,m_n)$ where each $m_i$ is either the zero matrix or the elementary matrix $E_{1,2}$. Clearly, the element $(E_{1,2},...,E_{1,2})$ belongs to the principal orbit. Moreover, for an orbit given by a generator we can obtain generators of orbits lying in its closure by replacing some nonzero $m_i$'s by zero matrices. \par
	To see that the projectivization of the nilpotent cone coincides with the toric quotient variety described in Example~\ref{exa.fav}, let us define a map:
	\begin{displaymath}
		\C^{2n+2} \rightarrow \mathfrak{sl}(2) \oplus ... \oplus \mathfrak{sl}(2),
	\end{displaymath}
	\begin{displaymath}
		(x_0,x_1,...,x_{2n+1}) \mapsto \left( \begin{bmatrix} x_0 \cdot x_{n+1} & x_0^2 \\ -x_{n+1}^2 & -x_0 \cdot x_{n+1} \end{bmatrix},...,  \begin{bmatrix} x_n \cdot x_{2n+1} & x_n^2 \\ -x_{2n+1}^2 & -x_n \cdot x_{2n+1} \end{bmatrix} \right).
	\end{displaymath}
	Now observe that the image of the affine space is precisely the nilpotent cone and that the map descends to the morphism between the projectivizations (of the affine space and of the algebra). Moreover, recall that in Example~\ref{exa.fav} we have defined an action of the group $\mathbb{Z}_2 \times ...\times \mathbb{Z}_2$ on $\p^{2n+1}$ and we see that the projectivization of our map is constant on orbits of this action. Finally, by a direct computation we may verify that the symplectic forms agree and in this way we rediscover our toric quotient as a projectivized nilpotent cone. Moreover, the resolution that we have described is the projectivized version of \textit{the Springer resolution} (see \cite[Section~6]{Gin97} or \cite{Fu03} for a reference). \par
	Example~\ref{exa.p5} is a union of 3 orbits in the projectivization of the algebra $\mathfrak{sp}(4) \oplus \mathfrak{sl}(2)$. Recall that for the symplectic group the adjoint variety is the projective space, embedded via the Veronese map. We put:
	\begin{displaymath}
		\C^{6} \rightarrow \mathfrak{sp}(4) \oplus \mathfrak{sl}(2),
	\end{displaymath}
	\begin{displaymath}
		(x_0,x_1,x_2,x_3,x_4,x_5) \mapsto \left( \begin{bmatrix} x_0 x_5 & x_0 x_3 & x_0^2 & x_0 x_2 \\ x_2 x_5 & x_2 x_3 & x_0 x_2 & x_2^2 \\ x_3^2 & x_3 x_5 & -x_0 x_5 & -x_2 x_5 \\ x_3 x_5 & x_5^2 & -x_0 x_3 & -x_2 x_3 \end{bmatrix}, \begin{bmatrix} x_1 x_4 & x_1^2 \\ -x_{4}^2 & -x_1 x_4 \end{bmatrix} \right).
	\end{displaymath}
	One can see that the $4 \times 4$ matrix is indeed an element of the algebra $\mathfrak{sp}(4)$: the off-diagonal $2 \times 2$ blocks are symmetric and the lower diagonal block is the negative transpose of the upper diagonal block. Moreover, it is traceless and of rank 1, so it is nilpotent and more precisely belongs to the minimal (nilpotent) orbit $\mathcal{O}_{ [ 2,1,1] }$. As before, we pass to the map between projective spaces and observe that it is constant on $\Z_2$-orbits to conclude. The two components of the singular locus described in Example~\ref{exa.p5} are precisely the projectivizations of the two minimal orbits of $\mathfrak{sp}(4)$ and $\mathfrak{sl}(2)$.
\end{rmk}
	\subsection{Projective singular contact threefolds}\label{section.threefolds}
	As a conclusion of our study we give the full classification of strictly singular projective threefolds equipped with the contact structure. It turns out that they all can be constructed from ruled surfaces, therefore we begin with fixing the notation.
\begin{set}\label{set.ruled}
	In this section $S$ denotes a ruled surface in the sense of \cite[V.2]{Ha77}, i.e.~a smooth projective surface equipped with a surjective morphism $p \colon S \rightarrow B$ to a smooth projective curve $B$ of genus $g$ such that every fiber is isomorphic to $\p^1$. We will denote (any) such fiber by $\ell$, remembering that it can also be considered as an effective divisor on $S$. With these assumptions, there exists a (non-unique) locally free sheaf $\mathcal{E}$ of rank 2 on $B$ such that $S \simeq \p(\mathcal{E})$ \cite[V, Prop.~2.2]{Ha77}. Moreover, to normalize the construction, one can demand that $H^0(B,\mathcal{E}) \ne 0$ but $H^0(B,\mathcal{E} \otimes \mathcal{L}) = 0$ for any line bundle $\mathcal{L}$ on $B$ of negative degree. Then $e = -\deg \mathcal{E}$ is an invariant of $S$ and there exists a section $s \colon B \rightarrow S$ such that the (divisor) class of its image, $B_0$ is equal to $\mathcal{O}_{\p(\mathcal{E})} (1)$ \cite[V, Prop.~2.8]{Ha77}. The space $N_1(S)$ as well as $N^1(S)$ is spanned by the classes of $\ell$ and $B_0$. \par
	In the particular case when $B = \p^1$, there are countably many isomorphism classes of ruled surfaces, determined by the integer $e \ge 1$, that will be denoted by $\mathbb{F}_e \coloneqq \p(\mathcal{O}_{\p^1} \oplus~\mathcal{O}_{\p^1}(-e))$. They are known as \textit{the Hirzebruch surfaces}. Every Hirzebruch surface admits a divisorial contraction of the section $B_0$ (it is called \textit{the minimal section}, as any other section has larger self-intersection) and the resulting variety is a cone that we will denote by $\mathcal{C}_e$. \par 
	Finally, we will consider the projectivized tangent bundle $\p(TS)$ over $S$, along with the natural projection $\pi \colon \p(TS) \rightarrow S$ and the fiber $C_{\pi}$. For the brevity of notation, we will sometimes denote by $\xi$ the class of $\mathcal{O}_{\p(TS)}(1)$. In our setting, $N^1(\p(TS))$ is spanned by classes of $\xi$, $\pi^* \ell$ and $\pi^* C_0$ \cite[II, Ex.~7.9]{Ha77}. Note that although in general $TS$ could also be twisted by some line bundle and the resulting projectivized bundle would be isomorphic to $\p(TS)$, we do not do it, as we want to keep the natural surjective morphism $\pi^* TS \rightarrow \mathcal{O}_{\p(TS)}(1)$.
\end{set}
\begin{rmk}
	There exists a unique ruled surface admitting two distinct rulings, namely $\p^1 \times \p^1$. It will provide a special case in our classification and it will require some additional care in our reasonings.
\end{rmk}
Our goal is to prove the following two theorems:
\begin{thm}\label{thm.threefolds}
	Notation as in Setting \ref{set.ruled}. Let $(X,L)$ be a singular contact variety in dimension 3 that is projective. Then $X$ admits a crepant resolution $f \colon \p(TS) \rightarrow X$ for some $S$. Going the other way around, every $\p(TS)$ (over a base curve $B$) admits a section $\sigma \colon S \rightarrow \p(TS)$ and a crepant morphism onto a singular contact threefold $X$. This crepant morphism contracts the image of $\sigma$ onto a curve isomorphic to $B$. Moreover, if $X$ is not Fano, then $\rho(X) =2$ and $X$ is a locally trivial bundle over $B$, whose fibers are cones $\mathcal{C}_2$. In this case, we have the following commutative diagram of contractions and sections:
	\begin{center}
		\begin{tikzcd}
			\p(TS) \arrow[r, "f"] \arrow[d, "\pi"] & X \arrow[d, "\pi'"] \\
			S \arrow[r, "p"] \arrow[u, bend left, "\sigma"] & B \arrow[l, bend left, "s"].
		\end{tikzcd}
	\end{center}
\end{thm}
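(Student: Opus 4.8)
\textit{The plan} is to treat the two directions separately---obtaining the crepant resolution in one, constructing the contraction in the other---and then to read off the bundle structure in the non-Fano case. For the forward implication I would first extract the shape of the singular locus from the stratification: by Theorem~\ref{thm.contact.stratification} and Corollary~\ref{cor.stratification} the even-codimension property forces $\dim X_{sing}=1$, while the next stratum would have odd codimension $3$ and must therefore be empty, so $X_{sing}$ is a disjoint union of smooth curves. To produce a crepant \emph{resolution} I would not analyse the transverse singularity by hand but instead pass to a terminalization $f\colon X'\to X$, which exists because $X$ is Gorenstein with canonical singularities. As $f$ is crepant, Theorem~\ref{thm.crepant.is.contact} makes $X'$ a contact variety with terminal singularities; by Corollary~\ref{cor.stratification} such a variety has $\codim X'_{sing}\ge 4$, which is impossible in dimension $3$ unless $X'$ is smooth. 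Hence $X'$ is a smooth projective contact manifold and $f$ is a crepant resolution.

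Since $X$ is singular, $f$ is a nontrivial birational morphism; as a projective variety of Picard number one admits no nontrivial birational morphism, $\rho(X')\ge 2$. The structure theorem for projective contact manifolds \cite{KPSW} then gives $X'\cong\p(TM)$ for a smooth surface $M$, with contact line bundle $\xi=\mathcal{O}_{\p(TM)}(1)$ and $-K_{X'}=2\xi$. To identify $M$, I note that every $f$-contracted curve $C$ satisfies $\xi\cdot C=f^*L\cdot C=0$, whereas a fibre of $\pi\colon\p(TM)\to M$ has $\xi$-degree $1$; hence $C$ is not a $\pi$-fibre, its image $\ell\coloneqq\pi(C)$ is a rational curve, and the vanishing $\xi\cdot C=0$ forces $T_M|_\ell$ to admit a degree-$0$ quotient, i.e.\ $T_M|_\ell\cong\mathcal{O}(2)\oplus\mathcal{O}$, with $C$ the minimal $(-2)$-section of the Hirzebruch surface $\pi^{-1}(\ell)$. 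Thus the $\ell$'s rule $M$ over $B\coloneqq X_{sing}$, exhibiting $M$ as a ruled surface $S$; in particular the transverse singularity of $X$ is the $A_1$ produced by collapsing a single $(-2)$-curve. The contracted curves assemble into the section $\sigma\colon S\to\p(TS)$, and $f$ collapses $\sigma(S)$ fibrewise ($\mathbb{F}_2\to\mathcal{C}_2$) onto $B$.

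For the converse I would reverse this computation. Given a ruled $p\colon S\to B$, the relative tangent sequence $0\to T_{S/B}\to T_S\to p^*T_B\to 0$ yields a line-bundle quotient $T_S\twoheadrightarrow p^*T_B$, hence (Grothendieck convention) a section $\sigma\colon S\to\p(TS)$ whose restriction to each fibre $\pi^{-1}(\ell)=\p(T_S|_\ell)=\mathbb{F}_2$ is the minimal $(-2)$-section; since that quotient has degree $0$ on $\ell$, one gets $\xi\cdot\sigma(\ell)=0$ and $K_{\p(TS)}\cdot\sigma(\ell)=-2\,\xi\cdot\sigma(\ell)=0$. Writing $q\coloneqq p\circ\pi\colon\p(TS)\to B$, the relative contraction $X\coloneqq\mathrm{Proj}_B\bigoplus_{m\ge0}q_*\mathcal{O}(m\xi)$ collapses the $(-2)$-section of each $\mathbb{F}_2$-fibre; it is divisorial, maps $\sigma(S)$ onto $B$ with fibres $\mathcal{C}_2$, and is crepant because the contracted class is $K$-trivial. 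By Theorem~\ref{thm.crepant.is.contact} the target $X$ is a contact variety, genuinely singular since $\mathcal{C}_2$ is the $A_1$ cone singularity.

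For the non-Fano refinement I would argue with Picard numbers. We have $\rho(\p(TS))=3$ by Setting~\ref{set.ruled}. If $X$ is not Fano then $\rho(X)\ge 2$, since $\rho(X)=1$ would make $-K_X=2L$ ample with $L$ the nef generator; as $X$ is singular, $f$ contracts at least one divisor, so $\rho(X)=3-(\text{number of contracted divisors})\le 2$, whence $\rho(X)=2$ and $f$ is a single divisorial contraction. The morphism $q=p\circ\pi$ descends through $f$ to a fibration $\pi'\colon X\to B$ with fibres $\mathcal{C}_2$, and because $S\to B$ is a locally trivial $\p^1$-bundle and the contraction is performed fibrewise over $B$, $\pi'$ is a locally trivial $\mathcal{C}_2$-bundle; collecting $f,\pi,\pi',p,s,\sigma$ yields the asserted diagram. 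I expect the principal obstacle to be the identification in the forward direction---proving that the crepant resolution is precisely $\p(TS)$ for a \emph{ruled} $S$ and that $f$ collapses a single section. This rests on the external classification of smooth contact manifolds together with the Hirzebruch-surface analysis of the $\xi$-trivial contracted curves; keeping track of which surface arises (and, in the exceptional case $S=\p^1\times\p^1$ with its two rulings, which ruling is collapsed) is where the real care lies, and is exactly the point at which the Fano and non-Fano cases diverge.
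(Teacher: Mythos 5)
Your forward direction follows the paper's own route almost exactly (terminalization, parity of the stratification forcing smoothness, the KPSW classification, then pinning down the surface via curves with $\xi\cdot C=0$), though it is compressed at the surface-identification step: you need \cite[Cor.~1.6]{HM07} to know the contracted curves are rational, and the jump from ``$T_M|_\ell$ has a degree-$0$ quotient'' to ``$T_M|_\ell\cong\mathcal{O}(2)\oplus\mathcal{O}$ and the $\ell$'s rule $M$'' hides the case analysis (the paper runs the extremal-ray argument $C^2\in\{-1,0,1\}$ and must explicitly exclude $\p^2$ and a rigid $(-1)$-curve, the latter via the Rigidity Lemma). These are recoverable. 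The genuine gap is in the converse direction: you invoke Theorem~\ref{thm.crepant.is.contact} to conclude that the \emph{target} of your crepant contraction is a contact variety, but that theorem goes the other way --- it transfers a contact structure from the target of a crepant birational morphism up to its source, not from the source down to the target. Worse, the statement you actually need (``a crepant contraction of a contact manifold has contact target'') is \emph{false} in general: the paper's Example~\ref{exa.CP} contracts the other crepant ray $\sigma_*[E_0]$ on $\p(TS)$ for an elliptic ruled surface $S$ and obtains a variety with nonrational singularities, which therefore fails Definition~\ref{defn.contact}. So no formal citation can replace the verification here.

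What must be checked on the target $X$ is precisely what Definition~\ref{defn.contact} demands: a globally defined line bundle $L$ with $f^*L=\mathcal{O}_{\p(TS)}(1)$ (the paper gets this from \cite[Thm~3.7~(4)]{KM98}, since the contraction is given by a semiample linear system), rational singularities (the paper checks this fiberwise: the exceptional curve of $\mathbb{F}_2\to\mathcal{C}_2$ is a $\p^1$, so $H^1(E,\mathcal{O}_E)=0$ and Reid's criterion gives $R^1\phi_*\mathcal{O}=0$ --- this is exactly the point where your ray $\sigma_*[\ell]$ succeeds and the ray of Example~\ref{exa.CP} fails), and the contact form on $X_{sm}$, which comes for free from the isomorphism $X_{sm}\simeq\p(TS)\setminus\sigma(S)$. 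Your relative construction $X=\mathrm{Proj}_B\bigoplus_m q_*\mathcal{O}(m\xi)$ is a reasonable substitute for the paper's divisor $D_a=a\,\pi^*\ell+\xi$ with Shokurov's base-point-free theorem, but it carries the same unaddressed obligation: you must show $\xi$ is $q$-semiample (e.g.\ by the relative base-point-free theorem, using $\xi$ $q$-nef and $3\xi=\xi-K_{\p(TS)}$ $q$-nef and $q$-big) before the Proj defines a morphism contracting $\sigma(S)$. With those verifications supplied, your Picard-number and local-triviality arguments for the non-Fano refinement match the paper's and are sound.
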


\begin{thm}\label{thm.3fold.fano}
	There exists a unique singular projective Fano contact variety $X$ in dimension 3. It has $\rho(X) =1$ and is resolved by $\p(T(\p^1 \times \p^1))$. The resolution morphism is given by the contraction associated to the linear system $| - b K_{\p(T(\p^1 \times \p^1))}|$ for $b >> 0$. Moreover, it coincides with the variety described in Example~\ref{exa.fav} and Remark~\ref{rmk.example} for dimension 3, i.e.~it can be described either as a quotient of $\p^3$ by $\Z_2$ or as a projectivization of the nilpotent cone of $\mathfrak{sl}(2) \times \mathfrak{sl}(2) $.
\end{thm}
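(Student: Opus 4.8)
The plan is to leverage Theorem~\ref{thm.threefolds} to reduce the Fano classification to a nefness-and-bigness question about the tangent bundle of a ruled surface, and then to pin down the unique case by a Mori-theoretic analysis of $\xi=\mathcal{O}_{\p(TS)}(1)$.

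First I would record the numerical input. By Theorem~\ref{thm.threefolds} a singular projective contact threefold $X$ admits a crepant resolution $f\colon \p(TS)\to X$ for some ruled surface $S$, and a direct computation with the relative canonical bundle of $\p(TS)\to S$ gives $-K_{\p(TS)}=2\xi$, the twist $\pi^*(\det TS+K_S)$ vanishing because $\det TS=-K_S$. Hence the contact line bundle of the resolution is exactly $L=\xi$, and crepancy of $f$ yields $f^*(-K_X)=-K_{\p(TS)}=2\xi$. I would then isolate the clean criterion: $X$ is Fano iff $-K_X$, equivalently $L_X=\mathcal{O}_X(1)$, is ample; pulling back along $f$ and taking the ample model along this birational contraction, this holds precisely when $\xi$ is nef and big on $\p(TS)$, i.e. when $TS$ is a nef and big vector bundle. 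In that situation $X$ is the anticanonical model $\operatorname{Proj}\bigoplus_m H^0(\p(TS),m\xi)=\operatorname{Proj}\bigoplus_m H^0(S,\operatorname{Sym}^m TS)$, and $f$ is the contraction attached to $|-bK_{\p(TS)}|$ for $b\gg0$, as asserted.

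Next I would constrain $S$. Since $X$ is a klt Fano it is rationally connected, hence so is its resolution $\p(TS)$; as $\p(TS)$ dominates $B$, the base curve must be rational, so $S=\mathbb{F}_e$ is a Hirzebruch surface. Now nefness of $TS$ decides everything: for $e\ge1$ the negative section $B_0$ (with $B_0^2=-e$) produces a quotient line bundle $N_{B_0/S}=\mathcal{O}(-e)$ of negative degree, so $TS$ is not nef and no Fano $X$ arises; for $e=0$, i.e. $S=\p^1\times\p^1$, one has $TS=\mathcal{O}(2,0)\oplus\mathcal{O}(0,2)$, which is globally generated hence nef, and $\xi^3=c_1(TS)^2-c_2(TS)=8-4=4>0$ shows it is big. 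Thus $S=\p^1\times\p^1$ is the unique possibility. Since $TS$ is nef but not ample (each ruling gives a curve with $\xi\cdot C=0$), $f$ is birational but not an isomorphism, so $X$ is genuinely singular; and because the two rulings contribute two independent contracted rays, a $2$-dimensional extremal face of $\overline{NE}(\p(TS))$ collapses and $\rho(X)=\rho(\p(TS))-2=1$.

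Finally I would establish existence and the stated identifications. The variety $X=\p^3/\Z_2$ of Example~\ref{exa.fav} (case $n=1$) is exactly this contraction of $\p(T(\p^1\times\p^1))$: it is a quotient of the Fano $\p^3$ with the ample $-K_{\p^3}$ descending from $-K_X$, so $X$ is Fano, and $\rho(X)=\rho(\p^3)^{\Z_2}=1$; Remark~\ref{rmk.example} rewrites it as the projectivized nilpotent cone of $\mathfrak{sl}(2)\times\mathfrak{sl}(2)$. Uniqueness of $X$ itself (not merely of $S$) follows because any two crepant resolutions by contact manifolds force, through Theorem~\ref{thm.contact.is.crepant} and Theorem~\ref{thm.crepant.is.contact}, the same ruled surface and hence the same anticanonical model. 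The step I expect to be the main obstacle is the precise Mori-theoretic bookkeeping for $S=\p^1\times\p^1$: one must verify that the $\xi$-trivial locus spans a genuine $2$-dimensional extremal face, so that $\rho$ drops all the way to $1$, and that the symmetry exchanging the two rulings does not conceal an extra contracted class — which is exactly where the special nature of the unique ruled surface with two distinct rulings must be used with care.
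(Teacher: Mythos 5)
Your proposal reaches the statement and is essentially sound, but it takes a genuinely different route from the paper at the decisive step of pinning down $S$. The paper starts from Proposition~\ref{prop.ruled} (not Theorem~\ref{thm.threefolds}), notes that $f^*(-K_X)=\mathcal{O}_{\p(TS)}(2)$ is nef, and then invokes the classification of Campana--Peternell surfaces \cite[Thm~3.1]{CP91}: either $S=\p^1\times\p^1$ or $B$ is elliptic with $\mathcal{E}$ semistable; the elliptic case is excluded by Lemma~\ref{lem.intersections}, since $\xi^3=4(1-g)=0$ is incompatible with the birationality (bigness) of the anticanonical map. You instead deduce $B\simeq\p^1$ from rational connectedness of klt Fano varieties (Zhang, Hacon--McKernan, cf.~\cite{HM07}) plus a strict-transform argument on the resolution, and then rule out $\mathbb{F}_e$ with $e\ge 1$ by the elementary observation that $N_{B_0/S}=\mathcal{O}(-e)$ is a negative-degree quotient of $TS_{|B_0}$, which kills nefness. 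Both routes work: yours is more elementary on the surface side but imports a heavier birational-geometry theorem; the paper's is self-contained modulo \cite{CP91}. In fact your own bigness remark $\xi^3=c_1^2-c_2>0$ combined with Lemma~\ref{lem.intersections} yields $g=0$ immediately, so you could skip rational connectedness altogether. The endgame --- $f$ is the anticanonical contraction, $\rho(X)=1$ because two independent contracted classes map to zero while $\rho(X)\ge 1$, and existence/identification via Example~\ref{exa.fav} and Remark~\ref{rmk.example} --- coincides with the paper's.

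Two cautions. First, your ``clean criterion'' is stated as an equivalence, and its converse is false: nefness and bigness of $\xi$ on $\p(TS)$ do not imply that a given crepant contraction target $X$ is Fano. For $S=\p^1\times\p^1$, Proposition~\ref{prop.shok} also produces the contraction of a \emph{single} ruling (supported by $D_a=a\cdot\pi^*\ell+\xi$), whose target is the non-Fano, $\rho=2$ contact threefold of Theorem~\ref{thm.threefolds}, even though $\xi$ is nef and big there; the image of the other section's ruling is a $(-K)$-trivial curve. Your argument survives only because you use the forward implication (Fano $\Rightarrow$ $\xi$ nef and big) together with the fact that ampleness of $-K_X$ forces $X$ to be the full anticanonical model $\operatorname{Proj}\bigoplus_m H^0\bigl(\p(TS),-mK_{\p(TS)}\bigr)$, and the latter is unique --- but the criterion should be stated in that asymmetric form. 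Second, you should quote Proposition~\ref{prop.ruled} rather than Theorem~\ref{thm.threefolds} for the existence of the crepant resolution: the paper's proof of Theorem~\ref{thm.threefolds} explicitly defers the Fano case to the present theorem, so citing it here is circular in appearance, whereas Proposition~\ref{prop.ruled} is the logically prior statement that does all the work you need.
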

\begin{rmk}
	Note that contact manifolds of the form $\p(TS)$ may admit different contact forms, and in fact we have a bijection (\cite[Prop.~2.14]{KPSW}):
	\begin{displaymath}
		H^0(S, \End(\Omega^1_S)) \rightarrow H^0(\p(TS), \Omega^1_{\p(TS)}(1))
	\end{displaymath}
	between automorphisms of $\Omega^1_S$ and contact forms on $\p(TS)$. In light of this identification, we should not expect that the contact structure on $X$ is unique.
\end{rmk}
We split the proofs of both theorems into a few auxiliaries. First, we will show that our threefolds can always be resolved by a projective contact manifold.
\begin{prop}\label{prop.ruled}
	Let $(X,L)$ be a projective singular contact threefold. Then $X$ has a crepant resolution of singularities by the projective contact manifold $(\p(TS), \mathcal{O}_{\p(TS)}(1))$ for some $S$ as in Setting~\ref{set.ruled}. Moreover, a ruling on $S$ is given by the image (via $\pi$) of the rational curve contracted by the resolution morphism.
\end{prop}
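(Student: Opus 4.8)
The plan is to first promote $X$ to a \emph{smooth} crepant model automatically, then read off its structure from the classification of smooth contact threefolds, and finally analyze the resulting contraction.

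First I would produce the resolution for free. By Proposition~\ref{prop.cartier} the singularities of $X$ are Gorenstein and canonical, so the minimal model program furnishes a projective, birational, crepant $\mathbb{Q}$-factorial terminalization $f\colon X'\to X$ with $X'$ terminal. By Theorem~\ref{thm.crepant.is.contact} the variety $X'$ is again contact, with contact line bundle $f^*L$. Since its singularities are terminal, Corollary~\ref{cor.stratification} forces $\codim_{X'}\Sing(X')\ge 4$; as $\dim X'=3$ this is possible only if $\Sing(X')=\varnothing$. Hence $X'$ is a smooth projective contact threefold and $f$ is a crepant resolution.

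Next I would appeal to the classification of smooth projective contact manifolds \cite{KPSW}: in dimension $3$, $X'$ either has $b_2(X')=1$ or is isomorphic to $\p(T^*S)\cong\p(TS)$ for a smooth projective surface $S$ (the two projectivizations agree, as $T^*S\cong TS\otimes(\det TS)^{-1}$). The first case is excluded here: $X$ is singular, so $f$ is a \emph{nontrivial} birational contraction, which a variety of Picard number one cannot admit. Therefore $X'\cong\p(TS)$, and in the normalization of Setting~\ref{set.ruled} the contact line bundle is $\xi=\mathcal{O}_{\p(TS)}(1)$, so that $-K_{X'}=2\xi$ and every $f$-contracted curve $C$ satisfies $\xi\cdot C=0$.

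It remains to recognize $S$ as a ruled surface and to identify the ruling, and this is where I expect the real work to lie. Since a fiber $C_\pi$ of $\pi\colon\p(TS)\to S$ has $\xi\cdot C_\pi=1$, the relation $\xi\cdot C=0$ shows that $C$ is not $\pi$-vertical, so $D\coloneqq\pi(C)$ is a curve in $S$ and $C$ is a section of $\p(TS)|_D\to D$ cut out by a degree-zero quotient of $TS|_D$. Using that $C$ spans the (rational) extremal ray contracted by $f$, I would show that $D\cong\p^1$ with $TS|_D\cong\mathcal{O}(2)\oplus\mathcal{O}$, so that $C$ is the $(-2)$-section of the Hirzebruch surface $\p(TS)|_D\cong\mathbb{F}_2$ --- consistent with the cones $\mathcal{C}_2$ of Setting~\ref{set.ruled}. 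The delicate points, which I regard as the main obstacle, are to verify that the exceptional divisor $\Exc(f)$ dominates $S$, so that the curves $D$ genuinely sweep out $S$ rather than a proper subvariety, and that this covering family of rational curves is without degenerations; granting this, the family endows $S$ with a morphism onto a smooth curve $B$ whose fibers are the $D$'s, exhibiting $S$ as a ruled surface in the sense of Setting~\ref{set.ruled} with ruling given by $\pi(C)$, as claimed.
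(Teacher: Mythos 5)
Your first two steps coincide with the paper's: the crepant $\mathbb{Q}$-factorial terminalization (via \cite[Cor.~1.4.4]{BCHM06}) is again contact by Theorem~\ref{thm.crepant.is.contact}, it is smooth because terminal singularities have codimension $\ge 3$ while Corollary~\ref{cor.stratification} forces even codimension, and then \cite[Thm~1.1]{KPSW} together with the impossibility of a nontrivial birational contraction from a Picard-number-one manifold gives $X'\simeq\p(T\Sigma)$ with contact bundle $\mathcal{O}_{\p(T\Sigma)}(1)$. Up to that point the proposal is sound and matches the paper.

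The genuine gap is the last third, which you yourself flag as ``the main obstacle'' and then leave ungranted. You never prove that $D=\pi(C)\cong\p^1$ with $TS_{|D}\cong\mathcal{O}(2)\oplus\mathcal{O}$, never prove that $\Exc(f)$ dominates $\Sigma$, and never prove that the family of curves $D$ has no degenerations --- yet these claims carry the entire weight of the conclusion that $\Sigma$ is ruled. (You also tacitly assume the contracted ray contains a rational curve; the paper needs \cite[Cor.~1.6]{HM07} for this, and your claim that $C$ is a \emph{section} over $D$ assumes $\pi_{|C}$ is birational, which is not automatic.) Moreover, the route you propose is close to circular: showing that $\Exc(f)$ dominates $\Sigma$ amounts to excluding the possibility that $\pi(\Exc(f))$ is a single rigid curve, and that exclusion is precisely the hard case. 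The paper sidesteps your obstacle entirely: pulling back the relative Euler sequence to the normalization of the contracted curve gives that the pullback of $T\Sigma$ splits as $\mathcal{O}(a_1)\oplus\mathcal{O}$ with $a_1\ge 2$, so $\pi(C_E)$ is a free rational curve whose class spans a $K_\Sigma$-negative extremal ray; the surface Cone Theorem \cite[Thm~1.28]{KM98} plus adjunction then leaves only $C^2\in\{-1,0,1\}$, i.e.\ a $(-1)$-curve, a ruling, or $\p^2$. The $\p^2$ case is excluded because $\p(T\p^2)$ is homogeneous Fano, hence has no $K$-trivial curves; the $(-1)$-curve case is excluded by the Rigidity Lemma: there $E=\p(T\Sigma_{|C})$ would map with connected fibers both onto $C$ and onto $C_X$, forcing $E$ to be a trivial ruled surface over $\p^1$ and hence $T\Sigma_{|C}\cong\mathcal{O}(a)\oplus\mathcal{O}(a)$, contradicting $-K_\Sigma\cdot C=1$. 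Only the ruling case survives, which is exactly the statement. Without an argument of this type (or an actual proof of your ``delicate points''), your proposal does not establish the proposition.
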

\begin{proof}
	To begin with, we show that $X$ admits a crepant resolution of singularities. Indeed, let us denote by $f\colon \widetilde{X} \rightarrow X$ the crepant terminalization of $X$, that exists by \cite[Cor.~1.4.4]{BCHM06}. It is a contact variety by Theorem~\ref{thm.crepant.is.contact}. The singular locus of a terminal variety has codimension at least 3 by \cite[Lem.~1.3.1]{BS95}, but on the other hand by Corollary~\ref{cor.stratification} its codimension must be even, therefore it is empty and $f$ already resolves all singularities and is crepant. Moreover, Corollary~\ref{cor.stratification} shows that the singular locus consists of disjoint, smooth curves. As $\widetilde{X}$ is a projective contact manifold that is not Fano (it admits a crepant contraction), it must be isomorphic to $\p(T\Sigma)$ with the contact line bundle $\mathcal{O}_{\widetilde{X}}(1)$ for some smooth projective surface $\Sigma$ by \cite[Thm~1.1]{KPSW} and moreover $f^* L = \mathcal{O}_{\widetilde{X}}(1)$. \par
	Now we will show that $\Sigma$ has to be a ruled surface. To that end, recall that every fiber of $f$ is covered by rational curves by \cite[Cor.~1.6]{HM07}. Denote by $E$ an irreducible (divisorial) component of the exceptional locus of $f$ that gets mapped onto some curve $C_X$ in the singular locus of $X$. We have a rational curve $C_E \subset E$ such that $C_E \cdot  \mathcal{O}_{\widetilde{X}}(1) = 0$, as the morphism contracting $C_E$ is crepant and $\mathcal{O}_{\widetilde{X}}(-K_{\widetilde{X}}) = \mathcal{O}_{\widetilde{X}}(2)$. Denote by $\gamma: \p^1 \rightarrow C_E \subset \p(T\Sigma)$ the normalization of $C_E$. Consider the relative Euler sequence (\cite[Lem.~2.5]{KPSW}):
	\begin{displaymath}
		0 \rightarrow \Omega^1_{\p(T\Sigma)/\Sigma} \otimes \mathcal{O}_{\widetilde{X}}(1) \rightarrow \pi^* T\Sigma \rightarrow \mathcal{O}_{\widetilde{X}}(1) \rightarrow 0
	\end{displaymath}
	and pull it back via $\gamma$, remembering that every vector bundle on $\p^1$ splits and that $\mathcal{O}_{\widetilde{X}}(1)$ is trivial on $C_E$. We obtain:
	\begin{displaymath}
		0 \rightarrow \mathcal{O}_{\p^1}(b) \rightarrow \mathcal{O}_{\p^1}(a_1) \oplus \mathcal{O}_{\p^1}(a_2) \rightarrow \mathcal{O}_{\p^1} \rightarrow 0,
	\end{displaymath}
	where we know that $a_1 \ge 2$ by \cite[II,~Lemma~3.13]{Ko96}. Consequently, the map $ \mathcal{O}_{\p^1}(a_1)\rightarrow \mathcal{O}_{\p^1}$ must be trivial, so we have $a_2 = 0$. It follows that there exists a free rational curve on $\Sigma$, so it is uniruled. Moreover, the numerical class of $C_E$ belongs to a $K_{\p(T\Sigma)}$-trivial extremal ray in the Mori cone of $\p(TS)$, so $\pi(C_E)$ belongs to a $K_\Sigma$-negative extremal ray in the Mori cone of $\Sigma$. Let $C$ be a rational curve on $\Sigma$ spanning the latter ray. By \cite[Thm~1.28]{KM98} we have $-K_{\Sigma} \cdot C \in \{1,2,3\}$, so via the adjunction formula we obtain $C^2 \in \{-1,0,1\}$ and these three possible self intersections correspond to three different contractions:
	\begin{itemize}
		\item If $C^2 = -1$, then $C$ is a smooth contractible $(-1)$-curve and it particular it does not move,
		\item if $C^2 = 0$, then $\Sigma$ is a ruled surface $S$ and $C$ is its fiber $\ell$, 
		\item if $C^2 = 1$, then $\Sigma \simeq \p^2$.
	\end{itemize} 
	We easily exclude the last possibility, as $\p(T\p^2)$ is a homogeneous Fano manifold, so in particular it does not admit any curves intersecting trivially with the anticanonical, consequently $C_E$ could not exist in this case. \par 
	To see that the first case is also impossible, observe that if $C$ does not move, then we necessarily have $\pi(E)  = C$, so $E$ is a ruled surface $\p(T\Sigma_{|C})$ over $C$. Now consider two maps from $E$, namely $\pi_{|E}$ onto $C$ and $f_{|E}$ onto $C_X$. Note that both have connected fibers and by the Rigidity Lemma~\cite[Lem.~1.6]{KM98} no fiber of $\pi_{|E}$ gets contracted to a point by $f_{|E}$ and vice versa. In particular, rational curves that are fibers of $\pi_{|E}$ are mapped onto $C_X$, so we have $C_X \simeq \p^1$. But this means that $E$ is a trivial ruled surface over $C \simeq \p^1$, so $T\Sigma_{|C} \simeq \mathcal{O}_{C}(a) \oplus \mathcal{O}_C (a)$ for some $a \in \mathbb{Z}$, as $E$ is not necessarily a normalized ruled surface. We have reached a contradiction, as we have $-K_{\Sigma} \cdot C = 1 \ne 2a$ for any $a$. We conclude that $\Sigma$ is indeed a ruled surface $S$. \par
	The last statement is clear: the resolution morphism contracts $C_E$ and we have just argued that $\pi(C_E)$ is the curve whose numerical class is a multiple of a ruling on $S$.
\end{proof}
Now we will establish the existence of section for $\pi$ and show some consequences of it.
\begin{prop}\label{prop.section}
	Notation as in Setting \ref{set.ruled}. A ruling $p \colon S \rightarrow B$ induces a section $\sigma \colon S \rightarrow \p(TS)$ that in particular allows us to embed $N_1(S)$ in $N_1(\p(TS))$. We have the following intersection table:
	\begin{center}
		\begin{tabular}{|c|c|c|c|}
			\hline 
			$\cdot$ & $\sigma_* [\ell]$ & $\sigma_* [B_0]$ & $[C_\pi]$ \\ \hline
			$\pi^* \ell$ & 0 & 1 & 0 \\ \hline
			$\pi^* B_0$ & 1 & $-e $& 0 \\ \hline
			$\mathcal{O}_{\p(TS)}(1)$ & 0 & $2-2g$ & 1 \\ \hline
		\end{tabular}
	\end{center}
	Consequently, the classes of curves $\sigma_* [\ell], \sigma_* [B_0], [C_\pi]$ form a basis of the vector space $N_1(\p(TS))$. For any $S$, the Mori cone of $\p(TS)$ has a face containing $C_{\pi}$ and $\sigma_* [B_0]$ that intersects nontrivially with the hypersurface $K_{\p(TS)} = 0$.
\end{prop}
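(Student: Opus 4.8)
The plan is to establish Proposition~\ref{prop.section} in three stages: first construct the section $\sigma$, then verify the intersection table by direct computation, and finally deduce the statement about the Mori cone face. For the section, I would use that a ruling $p \colon S \to B$ induces a splitting of the tangent sequence over the locus of fibers: the relative tangent sheaf $T_{S/B}$ is a line subbundle of $TS$ fitting into the exact sequence $0 \to T_{S/B} \to TS \to p^*TB \to 0$. Since each fiber is a $\p^1$ and $p$ is smooth, $T_{S/B}$ is a genuine sub-line-bundle, and under the Grothendieck convention $\p(TS)$ parametrizes rank-one quotients of $TS$, so the quotient $TS \twoheadrightarrow p^*TB$ defines a section $\sigma \colon S \to \p(TS)$ whose image is the divisor where the contact distribution is tangent to the fibers of $\pi$. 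I would record that $\sigma^* \mathcal{O}_{\p(TS)}(1) = p^*TB = p^*\mathcal{O}_B(-K_B)$, which has degree $2-2g$, since this is exactly what reproduces the bottom-right entries of the table.

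For the intersection table I would compute each of the nine entries using the projection formula and the standard relations on $S$ from Setting~\ref{set.ruled}. The entries in the first two rows reduce to intersection numbers on $S$: since $\pi \circ \sigma = \mathrm{id}_S$, one has $\pi^*D \cdot \sigma_*\alpha = D \cdot \alpha$ on $S$, giving $\pi^*\ell \cdot \sigma_*[\ell] = \ell^2 = 0$, $\pi^*\ell \cdot \sigma_*[B_0] = \ell \cdot B_0 = 1$, $\pi^*B_0 \cdot \sigma_*[\ell] = 1$, and $\pi^*B_0 \cdot \sigma_*[B_0] = B_0^2 = -e$, using $\ell^2=0$, $\ell\cdot B_0 = 1$, $B_0^2 = -e$ \cite[V, Prop.~2.9]{Ha77}. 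The entries against $C_\pi$ (the fiber of $\pi$) vanish for the pullback classes since $\pi_* C_\pi = 0$, while $\mathcal{O}_{\p(TS)}(1) \cdot C_\pi = 1$ by definition of the relative hyperplane class. The only subtle entries are $\mathcal{O}_{\p(TS)}(1) \cdot \sigma_*[\ell]$ and $\mathcal{O}_{\p(TS)}(1) \cdot \sigma_*[B_0]$: here I would use $\mathcal{O}_{\p(TS)}(1) \cdot \sigma_* \alpha = \deg \sigma^*\mathcal{O}_{\p(TS)}(1)|_\alpha = (p^*TB) \cdot \alpha$ on $S$. Against $\ell$ this gives $p^*TB \cdot \ell = 0$ (the fiber maps to a point in $B$), and against $B_0 \cong B$ it gives $\deg TB = 2-2g$. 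Once all nine entries are in hand, linear independence of $\sigma_*[\ell], \sigma_*[B_0], [C_\pi]$ follows immediately: the $3\times 3$ intersection matrix against the basis $\pi^*\ell, \pi^*B_0, \mathcal{O}_{\p(TS)}(1)$ of $N^1$ has nonzero determinant, which I would read off directly from the table.

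For the final claim about the Mori cone, the anticanonical class of a projectivized contact manifold satisfies $\mathcal{O}(-K_{\p(TS)}) = \mathcal{O}_{\p(TS)}(1)^{\otimes 2}$ by Proposition~\ref{prop.cartier} with $n=1$, so $K_{\p(TS)} = 0$ is the hypersurface $\{\mathcal{O}_{\p(TS)}(1) = 0\}$ in $N_1$. From the table, $\mathcal{O}_{\p(TS)}(1) \cdot C_\pi = 1 > 0$ and $\mathcal{O}_{\p(TS)}(1) \cdot \sigma_*[B_0] = 2-2g$. Here I must distinguish by genus. The clean case is $g=1$, where $\mathcal{O}_{\p(TS)}(1) \cdot \sigma_*[B_0] = 0$, so $\sigma_*[B_0]$ lies on the $K$-trivial hyperplane and the claim is immediate once I check $\sigma_*[B_0]$ is extremal. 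For general $g$ I expect the intended curve class realizing the $K$-trivial ray to be the class $C_E$ from Proposition~\ref{prop.ruled}, whose image under $\pi$ is a multiple of a ruling and which satisfies $\mathcal{O}_{\p(TS)}(1) \cdot C_E = 0$; I would identify the face spanned by $C_\pi$ and this $K$-trivial curve class and argue it is a genuine face of $\overline{NE}(\p(TS))$. \textbf{The main obstacle} is precisely this last step: producing the correct effective curve class on the $K$-trivial hyperplane and proving that the two-dimensional cone it spans together with $C_\pi$ is actually a face of the Mori cone, not merely a subcone meeting the hyperplane $K=0$. I expect to handle this by exhibiting the crepant contraction from Proposition~\ref{prop.ruled} (whose contracted classes are exactly $K$-trivial) and using that a crepant extremal contraction realizes a face lying in $\{K=0\}$; the curve $C_E$ contracted there, together with the fiber $C_\pi$ of $\pi$, spans the desired face.
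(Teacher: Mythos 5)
Your first two stages (constructing $\sigma$ from the quotient $TS \twoheadrightarrow p^*TB$, computing the nine entries via the projection formula together with $\sigma^*\mathcal{O}_{\p(TS)}(1) = p^*TB$, and reading off linear independence) coincide with the paper's argument and are fine. The gap is entirely in the last claim, and it is a real one: the statement demands a face of $\overline{NE(\p(TS))}$ containing \emph{both} $[C_\pi]$ \emph{and} $\sigma_*[B_0]$ that meets the hyperplane $K_{\p(TS)}=0$, and your plan never produces such a face. The curve $C_E$ of Proposition~\ref{prop.ruled} satisfies $K_{\p(TS)}\cdot C_E = 0$ and $\pi^*\ell \cdot C_E = 0$, so its class is proportional to $\sigma_*[\ell]$; hence the two-dimensional cone you propose to exhibit as a face lies in the plane spanned by $[C_\pi]$ and $\sigma_*[\ell]$, which by the linear independence you just established does \emph{not} contain $\sigma_*[B_0]$. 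So even granting all your steps, you would have identified the wrong face. In addition, the contraction you want to invoke is not available here: Proposition~\ref{prop.ruled} presupposes a singular contact threefold $X$ resolved by $\p(TS)$, whereas the present statement is ``for any $S$''; the crepant contraction of an arbitrary $\p(TS)$ is only constructed later (Proposition~\ref{prop.shok}), using the intersection table proved here, and even that contraction contracts only the ray $\R_{\ge 0}\sigma_*[\ell]$, a face containing neither $[C_\pi]$ nor $\sigma_*[B_0]$.

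The missing idea---the actual crux of the paper's proof---is how to place $\sigma_*[B_0]$ on the boundary of the Mori cone at all. You flag this yourself in the $g=1$ case (``once I check $\sigma_*[B_0]$ is extremal'') but give no argument, and for other $g$ you drop the requirement entirely. The paper uses the normalization assumption of Setting~\ref{set.ruled}: $[B_0]$ spans a boundary ray of $\overline{NE(S)}$, and since $\pi_*\sigma_*[B_0] = [B_0]$ while $\pi_*[C_\pi]=0$, both classes lie in a common proper face of $\overline{NE(\p(TS))}$ (the preimage of that ray under $\pi_*$). Once this face exists, the case $g\ge 1$ follows by convexity, because $\mathcal{O}_{\p(TS)}(1)\cdot\sigma_*[B_0] = 2-2g \le 0 < \mathcal{O}_{\p(TS)}(1)\cdot [C_\pi]$. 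For $g=0$, however, both classes are $K$-negative and your genus dichotomy has no mechanism to conclude; the paper invokes \cite[Prop.~2.13]{KPSW} (the only $K_{\p(TS)}$-negative extremal ray is $[C_\pi]$) to force the $K$-trivial class $\sigma_*[B_0]-2[C_\pi]$ into the same face. Without these two ingredients---the pushforward-to-$\overline{NE(S)}$ argument and the KPSW input for $g=0$---your outline does not prove the stated claim.
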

\begin{proof}
	The map $p \colon S \rightarrow B$ induces an epimorphism $TS \twoheadrightarrow p^*TB$ that corresponds to the section $\sigma$ by \cite[II, Prop.~7.12]{Ha77} and it holds that $\pi \circ \sigma = \id_{S}$. In consequence, we have the following maps of vector spaces:
	\begin{center}
		\begin{tikzcd}
			N_1 (S) \arrow[r, "\sigma_*"] & N_1(\p(TS)) \arrow[r, "\pi_*"] & N_1(S) \\
			N^1(S) \arrow[r, "\pi^*"] & N^1(\p(TS))  \arrow[r, "\sigma^*"] & N^1(S),
		\end{tikzcd}
	\end{center}
	where both compositions are identity. \par
	To compute two first rows of the intersection table, simply use the projection formula, remembering that on $S$ we have $\ell \cdot \ell = 0$, $\ell \cdot B_0 = 1$ and $B_0 \cdot B_0 = -e$ (the last equality is shown in \cite[Ch.~V, Prop.~2.9]{Ha77}). Moreover, $\pi_* [C_\pi] = 0$, so $C_\pi$ necessarily intersects trivially with divisors pulled back from $S$. \par
	Now we will consider the last row. We have $\mathcal{O}_{\p(TS)}(1) \cdot C_\pi =1$ by definition.
	If $C$ is any curve on $S$, then to compute its intersection with $\mathcal{O}_{\p(TS)}(1)$ we again employ the projection formula, this time for the morphism $\sigma$:
	\begin{displaymath}
		\sigma_*[C] \cdot \mathcal{O}_{\p(TS)}(1) = [C]  \cdot  \sigma^*(\mathcal{O}_{\p(TS)}(1) ) = [C] \cdot p^* TB,
	\end{displaymath}
	where the last equality comes from \cite[II,~Prop.~7.12]{Ha77}. In particular, we have $[\ell] \cdot p^* TB = 0$ and $[B_0] \cdot p^* TB = 2- 2g$. Recall from our earlier discussion in Setting~\ref{set.ruled} that $\mathcal{O}_{\p(TS)}(2) = \mathcal{O}(-K_{\p(TS)})$, so the morphism contracting $\sigma_* [\ell]$ -- if exists -- is crepant.\par  
		For the claim on the basis, the space $N_1(\p(TS))$ is of dimension 3 as a dual of $N^1(\p(TS))$ for which we have already picked a base (in Setting~\ref{set.ruled}). The computations of the intersection table show clearly that picked representatives are linearly independent in $N_1(\p(TS))$, so they form a base. \par 
		For the last claim, first observe that $\sigma_* [B_0]$ and $[C_\pi]$ are contained in a face of $\overline{NE(\p(TS))}$. Indeed, we have assumed in Setting~\ref{set.ruled} that $S$ is normalized, so in particular $[B_0]$ is a ray bounding the cone $\overline{NE(S)}$ and $\sigma_*[B_0]$ gets mapped onto this ray by $\pi_*$, so it cannot lie inside of the cone.
		
		Now, if $g(B_0)\ge 1$ then we are done, as in those cases $\sigma_* [B_0]$ intersects $-K_{\p(TS)}$ nonpositively. This is not the case if $g(B_0) = 0$, but since by \cite[Prop.~2.13]{KPSW} the cone $\overline{NE(\p(TS))}$ admits no $K_{\p(TS)}$-negative extremal rays other than $[C_\pi]$, then we deduce that the class of $\sigma_*[B_0] -2 [C_\pi]$ must also belong to the face containing  $\sigma_* [B_0]$ and $[C_\pi]$. In fact, one can directly show the existence of a curve in this class by considering the ruled surface $\p(TS_{|B_0})$, but we won't need it.

\end{proof}
We will make use of the following self-intersection:
\begin{lem}\label{lem.intersections}
	Notation as in Setting~\ref{set.ruled}. We have:
	\begin{displaymath}
		\xi^3 = 4(1-g).
	\end{displaymath}
\end{lem}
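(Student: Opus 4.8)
The plan is to reduce the triple self-intersection of $\xi = c_1(\mathcal{O}_{\p(TS)}(1))$ to Chern numbers of the surface $S$ via the Grothendieck relation. Since $TS$ has rank $2$ and we use the quotient convention (Setting~\ref{set.ruled}), the tautological surjection $\pi^* TS \twoheadrightarrow \mathcal{O}_{\p(TS)}(1)$ gives the relation
\begin{displaymath}
	\xi^2 - \pi^* c_1(TS)\cdot \xi + \pi^* c_2(TS) = 0,
\end{displaymath}
so that $\xi^2 = \pi^* c_1(TS)\cdot \xi - \pi^* c_2(TS)$ in the Chow ring of $\p(TS)$.

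Multiplying by $\xi$ and substituting this relation a second time, I would obtain
\begin{displaymath}
	\xi^3 = \pi^*\!\left(c_1(TS)^2\right)\cdot\xi - \pi^*\!\left(c_1(TS)\, c_2(TS)\right) - \pi^* c_2(TS)\cdot \xi.
\end{displaymath}
The middle term vanishes because $c_1(TS)\, c_2(TS)$ lies in $H^6(S)=0$ (as $S$ is a surface). Pushing forward by $\pi$ and using the projection formula together with the standard $\p^1$-bundle identities $\pi_* \xi = [S]$ and $\pi_* 1 = 0$, the two surviving terms reduce, after taking degrees, to
\begin{displaymath}
	\xi^3 = c_1(TS)^2 - c_2(TS) = K_S^2 - \chi_{\mathrm{top}}(S).
\end{displaymath}
Equivalently, this is $\int_S s_2(TS)$, the degree of the top Segre class of $TS$.

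Finally I would substitute the invariants of a geometrically ruled surface over a base curve of genus $g$. The topological Euler characteristic is multiplicative in the $\p^1$-fibration $p\colon S\to B$, giving $\chi_{\mathrm{top}}(S) = \chi_{\mathrm{top}}(\p^1)\cdot\chi_{\mathrm{top}}(B) = 2(2-2g) = 4(1-g)$, while $K_S^2 = 8(1-g)$ follows from the canonical class $K_S \equiv -2B_0 + (2g-2-e)\ell$ and the intersection numbers $B_0^2=-e$, $B_0\cdot\ell=1$, $\ell^2=0$ recalled in Setting~\ref{set.ruled} (the dependence on $e$ cancels). Hence $\xi^3 = 8(1-g) - 4(1-g) = 4(1-g)$, as claimed. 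I do not expect a genuine obstacle here: the computation is routine once the Grothendieck relation is set up, and the only care needed is to keep the sign convention consistent with the quotient convention for $\p(TS)$ and to recall the standard numerical invariants of ruled surfaces.
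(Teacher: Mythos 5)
Your proof is correct and takes essentially the same route as the paper: both use the Grothendieck relation $\xi^2 - \xi\cdot\pi^*c_1(TS) + \pi^*c_2(TS) = 0$ to reduce $\xi^3$ to $c_1(TS)^2 - c_2(TS)$ and then substitute the ruled-surface values $8(1-g)$ and $4(1-g)$. The only cosmetic difference is in how the Chern numbers are evaluated: you get $c_2(TS)=4(1-g)$ from multiplicativity of the topological Euler characteristic and $K_S^2=8(1-g)$ directly from the canonical class formula, whereas the paper invokes Noether's formula $12(1+p_a)=c_1^2+c_2$ with $p_a=-g$ and cites Hartshorne for $c_1(TS)^2=8(1-g)$.
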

\begin{proof}
	To begin with, recall that the pullback of cocycles $\pi^*$ gives the Chow ring $A(\p(TS))$ the structure of a free $A(S)$-module with a basis $1, \xi$ (\cite[App. A, 2.A.11]{Ha77}), where $\xi$ is the class of $\mathcal{O}_{\p(TS)}(1)$, so that $2 \xi$ is the class of $-K_{\p(TS)}$. Moreover, we have the following relation (\cite[App. A, 3, Definition on p. 429]{Ha77}):
	\begin{equation}\label{eqtn.chow}
		\xi^2 - \xi \cdot \pi^* c_1(TS) + \pi^* c_2(TS) = 0.
	\end{equation}
	Our task is to compute:
	\begin{align*}
		\xi^3  =\cdot \xi \cdot \xi^2 =  \xi (\xi \cdot \pi^* c_1(TS) -\pi^* c_2(TS))= \\
		=  (\pi^* c_1(TS))^2 - \pi^* c_1(TS) \cdot \pi^* c_2(TS) - \xi \cdot \pi^* c_2(TS) = \\
		= \xi \cdot (\pi^* c_1(TS))^2 - \xi \cdot \pi^* c_2(TS).
	\end{align*}
	We have $c_1(TS)^2 = 8(1-g)$ by \cite[V, Cor.~2.11]{Ha77}. Recall the Riemann-Roch formula for surfaces \cite[V, Remark 1.6.1]{Ha77}:
	\begin{displaymath}
		12(1+ p_a) = c_1(TS)^2 + c_2(TS),
	\end{displaymath}
	where $p_a$ is the arithmetic genus that is equal to $-g$ \cite[V, Cor.~2.5]{Ha77}. It allows us to compute that $c_2(TS) = 4(1-g)$ and consequently we easily obtain the desired intersection number ($[x]$ denotes the class of a point):
	\begin{displaymath}
		\xi^3 = \xi \cdot 4(1-g)\pi^* [x] = 4(1-g).
	\end{displaymath}
\end{proof}
We are ready to prove the existence of the contraction:
\begin{prop}\label{prop.shok}
	Notation as in Setting~\ref{set.ruled}. For any $\p(TS)$ and a positive integer $a$ such that $a > 3\cdot \max \{ 2g-2, e\}$, the linear system of some positive multiple of the divisor $D_a = a \cdot \pi^* \ell + \xi$ gives a crepant and birational morphism, contracting precisely the class $\sigma_* [\ell]$. In the special case when $S = \p^1 \times \p^1$ there is another crepant and birational morphism given by the linear system of some positive multiple of $-K_{\p(T(\p^1 \times \p^1))}$ (or $\xi$). It contract images of two sections, corresponding to two distinct rulings on $\p^1 \times \p^1$.
\end{prop}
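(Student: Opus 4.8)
The plan is to obtain the desired contraction as the morphism attached, through the base-point-free theorem, to the class $D_a=a\pi^*\ell+\xi$, after proving that $D_a$ is nef, big and semiample with $D_a^{\perp}\cap\overline{NE(\p(TS))}=\mathbb{R}_{\geq 0}\,\sigma_*[\ell]$. First I would read off from the table of Proposition~\ref{prop.section} that $D_a\cdot\sigma_*[\ell]=0$, $D_a\cdot C_\pi=1$ and $D_a\cdot\sigma_*[B_0]=a+2-2g$. Since $-K_{\p(TS)}=2\xi$ and $\xi\cdot\sigma_*[\ell]=0$, the ray $\mathbb{R}_{\geq 0}\sigma_*[\ell]$ is $K_{\p(TS)}$-trivial, so \emph{any} contraction of exactly this ray is automatically crepant; thus the whole content lies in nefness together with the precise null locus, and in semiampleness.

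The heart of the matter, and the step I expect to be the main obstacle, is nefness. As $\ell$ is base-point-free on $S$, the class $\pi^*\ell$ is nef, so I only need a lower bound for $\xi\cdot C$. For an irreducible $C$ with $\pi^*\ell\cdot C=0$ one checks that $\pi_*[C]$ is supported on a single fibre $\ell_0$, hence $C$ lies in the surface $\pi^{-1}(\ell_0)\cong\mathbb{F}_2$ (here $TS|_{\ell_0}\cong\mathcal{O}(2)\oplus\mathcal{O}$); on this Hirzebruch surface the Mori cone is generated by $C_\pi$ and by the $(-2)$-section $\sigma_*[\ell]$, on which $\xi$ has degrees $1$ and $0$, so $D_a\cdot C\geq 0$ with equality only for $\sigma_*[\ell]$. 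For an irreducible $C$ with $k:=\pi^*\ell\cdot C\geq 1$, let $\mu\colon\widetilde C\to S$ be the composite of the normalization with $\pi$; the tautological quotient $\pi^*TS\twoheadrightarrow\xi$ pulls back to a line-bundle quotient of $\mu^*TS$ of degree $\xi\cdot C$, so $\xi\cdot C=\deg\mu^*TS-\deg\mathcal{K}$ for the kernel sub-line-bundle $\mathcal{K}$. Composing $\mathcal{K}\hookrightarrow\mu^*TS$ with the relative tangent sequence $0\to T_{S/B}\to TS\to p^*TB\to 0$ gives two cases: either $\mathcal{K}\hookrightarrow\mu^*T_{S/B}$, whence $\deg\mathcal{K}\leq c_1(T_{S/B})\cdot\pi_*[C]$, or $\mathcal{K}$ maps nontrivially to $\mu^*p^*TB$, whence $\deg\mathcal{K}\leq c_1(p^*TB)\cdot\pi_*[C]$. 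Writing $\pi_*[C]=d\ell+kB_0$ and using $c_1(T_{S/B})=2B_0+e\ell$, $c_1(p^*TB)=(2-2g)\ell$ and $c_1(TS)=-K_S$, these reduce respectively to $\xi\cdot C\geq 2k(1-g)$ and $\xi\cdot C\geq 2d-ke$. Finally the pseudo-effectivity of $\pi_*[C]$ forces $2d-ke\geq -k\max\{e,0\}$ (Hartshorne V.2.20--2.21), so in either case $D_a\cdot C=ak+\xi\cdot C>0$ as soon as $a>\max\{2g-2,e\}$. This yields nefness and shows $D_a^{\perp}\cap\overline{NE(\p(TS))}=\mathbb{R}_{\geq 0}\sigma_*[\ell]$, the ray swept out by the curves $\sigma(\ell_0)$ that fill the section surface $\sigma(S)$.

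To pass from nefness to an actual morphism I would invoke the base-point-free theorem \cite[Thm~3.3]{KM98}, for which it suffices that $D_a$ be nef and that $D_a-K_{\p(TS)}$ be nef and big. Here $D_a-K_{\p(TS)}=a\pi^*\ell+3\xi=3\bigl(\tfrac{a}{3}\pi^*\ell+\xi\bigr)$, and this is exactly what explains the hypothesis $a>3\max\{2g-2,e\}$: it makes $\tfrac{a}{3}\pi^*\ell+\xi$ nef by the bound above. Bigness follows from a direct computation using $(\pi^*\ell)^2=0$, $(\pi^*\ell)\cdot\xi^2=2$ and $\xi^3=4(1-g)$ of Lemma~\ref{lem.intersections}, giving $D_a^3=6a+4-4g>0$. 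Hence some positive multiple of $D_a$ is base-point-free and defines $f\colon\p(TS)\to X$; it is birational because $D_a$ is big, it contracts precisely the $\sigma_*[\ell]$-curves, i.e.\ it contracts $\sigma(S)$ onto $B\cong f(\sigma(S))$, and it is crepant because $K_{\p(TS)}\cdot\sigma_*[\ell]=0$. I would stress that the ordinary Contraction Theorem does not apply, the contracted ray being $K_{\p(TS)}$-trivial rather than negative, which is why the base-point-free theorem is the correct device.

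For the special case $S=\p^1\times\p^1$ (so $g=0$, $e=0$) I would rerun the same null-locus computation for $\xi=-\tfrac12 K_{\p(TS)}$ itself: with $a=0$ the two inequalities read $\xi\cdot C\geq 2k$ and $\xi\cdot C\geq 2d\geq 0$, so $\xi$ is nef, and $\xi^3=4>0$ makes it big. Now its null face is two-dimensional, spanned by $\sigma_*[\ell]$ and by the second section class $\sigma_*[B_0]-2C_\pi$ produced by the other ruling (the class singled out in Proposition~\ref{prop.section}). Applying the base-point-free theorem to $-K_{\p(TS)}$, that is to $|-bK_{\p(TS)}|$ for $b\gg 0$, then yields a crepant birational morphism simultaneously contracting the two section surfaces of the two rulings, as asserted. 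In all cases the targets are again contact varieties by Theorem~\ref{thm.crepant.is.contact}.
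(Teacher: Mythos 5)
Your proposal is correct, and its overall skeleton coincides with the paper's: establish that $D_a$ is nef and big, apply Shokurov's base-point-free theorem \cite[Thm~3.3]{KM98} (with the same observation that $D_a-K_{\p(TS)}=3\bigl(\tfrac{a}{3}\pi^*\ell+\xi\bigr)$, which is what the factor $3$ in the hypothesis is for), and deduce crepancy from $K_{\p(TS)}\cdot\sigma_*[\ell]=0$; the treatment of $S=\p^1\times\p^1$ via $\xi$ itself is also the same. Where you genuinely diverge is in the proof of nefness and bigness, which is the technical heart. The paper works at the level of vector bundles: it shows $TS\otimes\mathcal{O}_S(a\ell)$ is nef using the twisted relative tangent sequence together with the fact that an extension of nef bundles is nef \cite[Prop.~1.2.(5)]{CP91}, and big because it contains the ample subbundle $T_{S/B}\otimes\mathcal{O}_S(a\ell)$ \cite[Lem.~2.4]{Liu23}. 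You instead argue curve by curve: curves with $\pi^*\ell\cdot C=0$ live in the $\mathbb{F}_2$-fibers of $p\circ\pi$, where the only $\xi$-trivial curve is the minimal section $\sigma(\ell_0)$, while horizontal curves are handled through the kernel line bundle on the normalization, the relative tangent sequence, and Hartshorne's bounds V.2.20--2.21 on irreducible curve classes of ruled surfaces; bigness you get numerically from $D_a^3=6a+4-4g>0$. Your dichotomy for the kernel $\mathcal{K}$ is in effect the rank-two case of the nef-extension lemma the paper cites, so the arguments are cousins, but yours is more elementary (only Hartshorne and \cite{KM98}) and has the added benefit of exhibiting the null locus of $D_a$ explicitly, which makes the claim that the morphism contracts \emph{precisely} $\sigma_*[\ell]$ transparent; the paper's route is shorter, avoids intersection computations, and its identification of the contracted ray is correspondingly terser. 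Two small points to tighten: you should also record that $D_a-K_{\p(TS)}$ is \emph{big} (not just nef) before invoking the base-point-free theorem --- this is the same computation, $(D_a-K_{\p(TS)})^3=27(2a+4-4g)>0$ --- and the final assertion that $K_{\p(TS)}$-triviality of the contracted curves yields crepancy deserves the (standard) remark that $\xi$, being trivial on all fibers of the contraction, descends to a $\mathbb{Q}$-Cartier divisor on the target, so $K_{\p(TS)}=f^*K_X$; the paper is equally brief on this last point.
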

\begin{proof}
	To show that the linear system of some positive multiple of a divisor $D$ gives a birational morphism, it is enough to show that $D$ is big, nef and semiample. To that end, consider a vector bundle $TS \otimes \mathcal{O}_S(a\cdot \ell)$. First, we will show that it is nef and big\footnote{The author is grateful to the anonymous referee for suggesting this argument, much clearer than the one originally provided.} (in the sense of Lazarsfeld) for $a$ large enough, as by definition it means that $D_a$ is nef and big. We have the twisted relative tangent sequence:
		\begin{equation}\label{eqtn.ses}
		0 \rightarrow T_{S/B} \otimes \mathcal{O}_S(a \cdot \ell) \rightarrow TS \otimes \mathcal{O}_S(a \cdot \ell) \rightarrow p^* TB  \otimes \mathcal{O}_S(a \cdot \ell) \rightarrow 0.
	\end{equation}
	From the relative Euler sequence for $S = \p(\mathcal{E})$:
	\begin{displaymath}
		0 \rightarrow \mathcal{O}_S \rightarrow p^* (\mathcal{E}^*) \otimes \mathcal{O}_S(1) \rightarrow T_{S/B} \rightarrow 0
	\end{displaymath}
	we obtain $T_{S/B} \simeq p^*(\det \mathcal{E}^*) \otimes  \mathcal{O}_S(2)$, so numerically we have $T_{S/B} \equiv 2B_0 + e \cdot \ell$ and $ p^* TB \equiv (2 -2g) \cdot \ell$. \par 
	To show that the vector bundle in the middle of the sequence~(\ref{eqtn.ses}) is nef, by \cite[Prop.~1.2.(5)]{CP91} it is enough to show that the other two terms in the short exact sequence are nef. This is straightforward: the term on the right is nef if it is numerically equivalent to a nonnegative multiple of $\ell$ and it holds if $a \ge 2g -2$. The term on the left is nef if $a \ge e$. Consequently if $a \ge \max \{ 2g-2, e\}$ then we ensure the nefness of $D_a$. Note that it follows that in the special case $D_0 = \xi$ is also nef. \par 
	Now we will show that $TS \otimes \mathcal{O}_S(a \cdot \ell)$ is big. Observe that it has a subbundle $T_{S/B} \otimes \mathcal{O}_S(a \cdot \ell) \equiv 2B_0 +(a+ e)\ell$ that is ample for $a > e$, so the bigness of $D_a$ follows from \cite[Lem.~2.4]{Liu23}. In the case when $S = \p^1 \times \p^1$ we cannot argue in the same way, but $-K_{\p(T(\p^1 \times \p^1))}$ is big as by Lemma~\ref{lem.intersections} its top self-intersection is positive. \par 
	To prove the semiampleness, simply invoke the Shokurov's Theorem \cite[Thm~3.3]{KM98}: we have just shown that $D_a$ is big and nef. If we replace $a$ by $3a$ then we can ensure that $D_a - K_{\p(TS)}$ is also nef and big, so the assumptions are satisfied and we claim the semiampleness of $D_a$. In the special case we also use the Shokurov's Theorem, this time for the divisor $D_0 = \xi$ that was also shown to be big and nef. \par 
	Consequently, we have shown that the map given by the linear system of some positive multiple $D_a$ for $a > 3 \cdot \max \{e, 2g-2\}$ is a birational morphism. Moreover, $D_a \cdot \sigma_*[\ell] = 0$ from Proposition~\ref{prop.section} and since already $D_a - K_{\p(TS)}$ is nef, we observe that $\sigma_*[\ell]$ is the only class intersecting $D_a$ trivially, so the morphism contracts only this class. Then, as $K_{\p(TS)} \cdot \sigma_*[\ell] = 0$, the morphism is crepant and we are done in the general case. \par 
	If $S = \p^1 \times \p^1$, then the map given by the anticanonical linear system is a birational morphism that is clearly crepant. As shown in Proposition~\ref{prop.section}, any ruling $\ell_i$ on $S$ gives a section $\sigma_i$ and we have $\sigma_i(\ell_i) \cdot K_{\p(TS)} = 0 $, so the morphism given by the anticanonical contracts chosen rulings on both sections.
\end{proof}
We are ready to finish proofs of both main theorems, starting from Theorem~\ref{thm.3fold.fano}. To do it quickly, we will use the classical notion of a Campana-Peternell manifold. A projective manifold $M$ is Campana-Peternell (CP) if it has nef tangent bundle (i.e.~such that $\mathcal{O}_{\p (TM)}(1)$ is nef). Conjecturally, in the Fano case CP manifolds are rational homogeneous spaces, a characterization extending the one proved by Mori for the projective space. Moreover, all CP surfaces were classified in \cite[Thm~3.1]{CP91}. In particular, for a ruled surface $S$ over $B$, $\mathcal{O}_{\p(TS)}(1)$ is nef if and only if $S = \p^1 \times \p^1$ (we have just observed that) or $B$ is an elliptic curve and the vector bundle $\mathcal{E}$ is semistable. 
\begin{proof}[Proof of Theorem~\ref{thm.3fold.fano}]
	Let $X$ be a contact Fano variety of dimension 3 with a nonempty singular locus. By Proposition~\ref{prop.ruled} it admits a crepant resolution of singularities by $\p(TS)$ for some $S$. The anticanonical divisor $-K_X$ is ample by definition, so $-K_{\p(TS)} = f^* (-K_X)$ is nef and moreover we may take $-K_{\p(TS)}$ to be the supporting divisor for the contraction $f$, as it is the pullback of an ample divisor from the target. The nefness of $\mathcal{O}(-K_{\p(TS)}) = \mathcal{O}_{\p(TS)}(2)$ implies that $S$ is Campana-Peternell and in particular $B$ is either a projective line or an elliptic curve. We exclude the second case, as then $\xi^3 =0$ by Lemma~\ref{lem.intersections}, so the resulting map would not be birational. We conclude that $S= \p^1 \times \p^1$. Moreover, as we have already observed in Proposition~\ref{prop.shok}, the linear system $|-b\cdot K_{\p(TS)}|$ for $b >> 0$ does indeed define a birational morphism that contracts two sections of $\pi$ along their chosen rulings. \par
	To see that $\rho(X) = 1$ it is enough to observe that the Mori cone of $\p(T(\p^1 \times \p^1))$ is spanned by three rays, namely $[C_\pi]$, $[\sigma_1(\ell_1)]$ and $[\sigma_2(\ell_2)]$ and the two latter are contracted by $f$. \par 
	The prove the last claim simply observe that the variety described in Example~\ref{exa.fav} is Fano for every possible dimension (it can be done either by a direct toric computation or invoking adequate result from the theory of nilpotent orbits) and has a nonempty singular locus (we have already described it).
\end{proof}
\begin{proof}[Proof of Theorem~\ref{thm.threefolds}]
	Let $X$ be a projective singular contact variety of dimension 3. By Proposition~\ref{prop.ruled} it has a crepant resolution of singularities of the form $\p(TS)$ and the resolution morphism contracts the class of a rational curve $C$ such that $\pi_*[C] = [\ell]$. If $S = \p^1 \times \p^1$ and $\rho(X) =1$, then $X$ is Fano, so we may exclude this case from our current reasoning, as it was discussed in Theorem~\ref{thm.3fold.fano}. In particular, it means that $C$ distinguishes one ruling if $S$ admits two, that in turn determines the section $\sigma \colon S \rightarrow \p(TS)$. Then Proposition~\ref{prop.section} shows that we can identify the ray spanned by $[C]$ with the one spanned by $\sigma_* [\ell]$, as both lie in the intersection of the hyperplanes $K_{\p(TS)} = 0$ and $\pi^* \ell = 0 $. In Proposition~\ref{prop.shok} we have proven that the birational morphism contracting $\sigma_*[\ell]$ is given by the linear system of some positive multiple of $D_{a} = a \cdot \pi^* \ell + \xi$ for some integer $a >0$. \par
	Now we want to prove that for every $\p(TS)$ we can contract the section $\sigma$ and obtain a contact variety. To that end, let us denote by $\phi$ the morphism given by $|b \cdot D_{a}|$ for $b >>0 $ and by $Y$ the target variety. We need to show that there exists a commutative square of contractions, $Y$ is a locally trivial bundle of cones with $\rho(Y) = 2$ and that it admits a contact structure. 
	
	First, let us observe that there are two distinct classes of curves on $Y$, whose representatives are $\phi(C_\pi)$ and $\phi(\sigma(B_0))$, so in particular $\rho(Y) = 2$ and the Mori cone $\overline{NE(Y)}$ is spanned by two rays. Moreover, $\phi$ induces a surjective map $\overline{NE(\p(TS))} \rightarrow \overline{NE(Y)}$. Since $\phi$ is crepant, by Proposition~\ref{prop.section} we deduce that $\overline{NE(Y)}$ intesects nontrivially both the open half-space $K_Y < 0$ and the closed half-space $K_Y \ge 0$. Let us denote the unique $K_Y$-negative ray of $ \overline{NE(Y)}$ by $\gamma$. We claim that $\gamma = \phi_*[C_\pi]$. 
	
	Reasoning by contradiction, suppose that $\gamma$ is distinct from $\phi_*[C_\pi]$, so the latter is in the interior of the cone. Then the preimage of $\gamma$ via $\phi_*$ contains a face (of dimension 1 or 2) of the Mori cone, that by crepancy of $\phi$ is $K_{\p(TS)}$-negative. However $[C_\pi] \notin \phi_*^{-1}(\gamma)$, so this is a contradiction with the existence of the unique Mori ray for $\p(TS)$. We conclude that $\phi_*[C_\pi] = \gamma$, so it gives a Mori contraction. It is necessarily of fiber type, as so is $[C_\pi]$ in $\p(TS)$. We denote it by $\pi'$. It is clear that $p \circ \pi = \pi' \circ \phi$, as both compositions are elementary contractions that differ only in the order.  \par
	$S$ is a locally trivial $\p^1$-bundle over $B$, so we may consider $\p(TS)$ as a locally trivial bundle over $B$ with fibers $\mathbb{F}_2$. Observe that $\sigma(\ell) \subset \mathbb{F}_2$ is precisely the minimal section of $\mathbb{F}_2$. If not, then the class of the minimal section of $\mathbb{F}_2$ would be $\sigma_*[\ell] - m \cdot [C_\pi]$ for $m > 0$, but the existence of a curve in such class contradicts the nefness of $D_{a}$ for any $a > 0$. Consequently, the morphism $\phi$ restricted to any fiber of $\p(TS) \rightarrow B$ is the contraction onto a cone $\mathcal{C}_2$, so we conclude that $Y$ is a $\mathcal{C}_2$-bundle over $B$. \par 
	To verify that the resulting variety $Y$ is a contact variety, we need to check that there exists a globally defined contact line bundle $L$ and that the singularities are rational. For the first claim we can simply use \cite[Thm~3.7 (4)]{KM98} to claim that $Y$ admits a line bundle $L$ such that $\phi^* L = \mathcal{O}_{\p(TS)}(1)$ (note that the cited theorem is stated for Mori contractions, but the part that we use does not require contraction to be Mori, only to be given by the linear system of some semiample divisor). \par
	Concerning the class of singularities, consider the situation fiberwise. Over every $b \in B$ we have $\phi_{|\pi^{-1}(b)} \colon \mathbb{F}_2 \rightarrow \mathcal{C}_2$ and this map is a resolution of singularities of a surface $\mathcal{C}_2$. The exceptional divisor $E$ of $\phi_{|\pi^{-1}(b)}$ is a rational curve, so in particular $H^1(E, \mathcal{O}_E) = 0$. By \cite[Cor.~4.9]{Re97} it implies that $R^1 (\phi_{|\mathcal{S}})_* \mathcal{O}_{\mathcal{S}} = 0$, so the singularity is rational and we are done by the local triviality of $Y$. Consequently, $Y$ is a projective singular contact threefold and $\phi$ is its crepant resolution of singularities, so using the notation from the formulation of the theorem, we have $Y=X$ and $\phi = f$.
\end{proof}
We have therefore settled the situation in dimension 3. Unfortunately, to give classification in higher dimensions, one needs more refined arguments: in higher dimensions a resolution of singularities may not produce a projective contact manifold. In the particular case when $\dim (X) = 5$ the terminalization produces a contact variety whose singular locus consists of disjoint, smooth curves by Theorem~\ref{thm.contact.stratification} and resolving them destroys the contact structure as illustrated by Example~\ref{exa.p5}. \par 
Interestingly enough, in dimension 3 smooth examples are more frequent than singular ones: every smooth projective surface $\Sigma$ produces a projective contact manifold of the form~$\p(T\Sigma)$, while as we have just seen, singular contact threefolds essentially correspond to ruled surfaces. On the other hand, just as in the smooth case, the only Fano example comes from the projectivized orbit closure. Nevertheless, the author does not dare to hypothesize that in higher dimensions such singular analogue of LeBrun-Salamon conjecture holds. \par
Finally, as a byproduct of our reasoning, we are able to construct a variety whose singularities are contact in the sense of \cite{CF01}, but that does not satisfy Definition~\ref{defn.contact}:
\begin{exa}\label{exa.CP}[Threefold \textit{\`a la} Campana-Flenner]
	This time let $S$ be the trivial ruled surface over an elliptic curve $E$ given by the rank 2 vector bundle $\mathcal{O}_E \oplus \mathcal{O}_E$, that in particular is semistable and has invariant $e =0$. Any section of the Mori cone of $\p(TS)$ is triangular with two rays lying on the hyperplane $K_{\p(TS)} = 0$. As we have just discussed, the contraction of the ray $\sigma_*[\ell]$ (recall that $\sigma$ by Propostion~\ref{prop.section} corresponds to the surjection $TS \rightarrow TE$) gives us a singular contact threefold. Instead of that, let us contract the other crepant ray, i.e.~given by $\sigma_*[E_0]$, where $E_0 = E \times \{p\}$ for any $p \in \p^1 = \ell$. By a reasoning analogous to the one conducted in this section one can show that the divisor $ D \coloneqq \pi^* E_0 + \pi^* \ell - K_{\p(TS)}$ is big, nef and semiample, so the linear system of some positive multiple of it gives a crepant and birational morphism that will be denoted by $h$. This morphism contracts $\sigma_*[E_0]$, as it is the only class intersected trivially by $D$. \par 
	We can construct (via the pushforward) the contact structure on the resulting variety $Y$ from the one on $\p(TS)$, however there is one subtlety. Namely, for a contact form to be defined around a singular point $x$, we need the contact bundle to be trivial on some neighbourhood of $x$. In our case, we demand $\mathcal{O}_{\p(TS)}(1)$ (the contact bundle on the resolution) to be trivial on $\sigma(E_0)$. By the projection formula and the definition of $\mathcal{O}(1)$, this is equivalent to the triviality of pullback of $TE$ to $E_0$, but it clearly holds as $E$ is elliptic. Consequently, $Y$ has contact singularities in the sense of Campana and Flenner \cite{CF01}. \par 
	However, this variety does not satisfy Definition~\ref{defn.contact}, as the singularities are not rational. It can be checked directly: $Y$ has a resolution of singularities $h \colon \p(TS) \rightarrow Y$. If we restrict ourselves to $\mathcal{S} \coloneqq \p(TS_{|E_0})$, then $h$ becomes a contraction from a ruled surface $\mathcal{S}$ over $E$ onto a cone $\mathcal{C}$ (over $E$). In particular, the exceptional divisor is isomorphic to $E$, so it has $h^1(E, \mathcal{O}_E) =1$. The hyperplane exact sequence for the exceptional divisor in $\mathcal{S}$ gives a surjective homomorphism $R^1 (h_*)_{|\mathcal{S}} \mathcal{O}_{\mathcal{S}} \rightarrow H^1(E, \mathcal{O}_E)$, as shown in \cite[Section~4.8]{Re97}, so in particular $R^1 h_* \mathcal{O}_{\p(TS)} \ne 0$ and the singularities are not rational. \par 
	Finally, let us note another peculiarity of this example. In the symplectic case Namikawa showed that the rationality of singularities is equivalent to the existence of extension of the symplectic form to the resolution \cite[Thm~6]{Nam01}. For the contact example that we have just constructed, the situation is different, as the contact form clearly extends to the resolution, even though the singularities are not rational (they are strictly log canonical). This difference can be justified by the fact that 1-forms on log canonical varieties (pairs) have better extension properties than forms of higher order, see \cite[Thm~1.4]{GK13} and compare with \cite[Thm~1.5]{GKKP11}.
\end{exa}
	\bibliographystyle{alpha}
	\bibliography{biblio}
\end{document}